 \newcommand{\R}{{\rm \bf R}}
\newcommand{\co}{\mathop{\mbox{co}}}
\newcommand{\Rnn}{\bar{\mathbb R}_+^n}
\newcommand{\diag}{\mathop{\mbox{diag}}}
\newcommand{\sgn}{\mathop{\mbox{sgn}}}
\newcommand{\supp}{\mathop{\mbox{supp}}}
\newcommand{\im}{\mathop{\mbox{Im}}}
\newcommand{\tV}{{\tilde{V}}}
\newcommand{\hV}{{\hat{V}}}
\newcommand{\cW}{{\mathcal W}}
  \newtheorem{theorem}{Theorem}
  \newtheorem{lemma}[theorem]{Lemma}
  \newtheorem{corollary}[theorem]{Corollary}
  \newtheorem{proposition}[theorem]{Proposition}
  \newtheorem{definition}{Definition}
  \newtheorem{remark}{Remark}
\begin{document}
\begin{frontmatter}
\title{
{\bf Robust Lyapunov Functions for Reaction Networks:\\ An Uncertain System Framework}}

 \author[mit]{M. Ali Al-Radhawi}
 \ead{{malirdwi@mit.edu}}
\author[ic,fl]{David Angeli}
 \ead{{david.angeli@imperial.ac.uk}}
  \address[mit]{Department of Mechanical Engineering, Massachusetts Institute of Technology, 77 Massachusetts Avenue, Cambridge, MA 02139, United States.}
 \address[ic]{Dept. of Electrical and Electronic Engineering, Imperial College London, London SW7 2AZ, United Kingdom.}
\address[fl]{Dip. di Ingegneria dell'Informazione, University of Florence, 50139 Florence, Italy.}

\begin{abstract}
We present a framework to transform the problem of finding a Lyapunov function for a Chemical Reaction Network (CRN) with arbitrary monotone kinetics expressed in concentration coordinates into finding a common Lyapunov function for a linear differential inclusion in reaction coordinates. Alternative formulations in different coordinates are also provided.
This is applied to reinterpret previous results by the authors on Piecewise Linear in Rates Lyapunov functions and to establish a link with contraction analysis. The framework is then applied to derive powerful results on the network's persistence and the uniqueness of equilibria.
\end{abstract}

 \begin{keyword}
 Lyapunov stability, Persistence, Contraction Analysis, Linear Differential Inclusions, Biochemical Networks.
 \end{keyword}
\end{frontmatter}

\section{Introduction}
{Chemical Reaction Networks} (CRNs) are used as models in several areas of science and engineering; including chemical engineering, population dynamics, and {molecular systems biology}. The last application has drawn the recent interest of the systems and control community to this field \cite{sontag01,angeli09tut}.

One of the central dilemmas in systems biology is that the kinetic information required to construct a detailed mathematical model is scarce and subject to uncertainty. This is unlike the wealth of information available on the graphical description of the networks involved. Hence, the main focus of Chemical Reaction Networks Theory and of our research has been to draw conclusions about the asymptotic behaviour of networks classes regardless of detailed knowledge of its kinetics. This has been advocated towards the aim of constructing  ``complex biology without parameters'' \cite{bailey01}. The presumed feasibility of this goal has been motivated by the observation that large classes of CRNs converge asymptotically to steady states regardless of the model of kinetics involved. Indeed, partial success has been achieved in this pursuit. For example, the class of weakly reversible zero-deficiency networks with Mass-Action kinetics has been shown to have unique asymptotically stable equilibria in the interior of the orthant \cite{horn72,feinberg95}. Monomolecular networks, on the other hand, have also been handled within the framework of compartmental systems \cite{maeda78}. The theory of monotone systems has been also applied to provide graphical conditions for global convergence of some of these networks \cite{angeli10}. Recently, piecewise linear Lyapunov functions based methods has been proposed recently \cite{blanchini11,mtns,MA_cdc13,PWLRj,blanchini14}.

In a previous work \cite{MA_cdc13,PWLRj}, the authors proposed a direct approach to the problem, where Piecewise-Linear in Rates (PWLR) Lyapunov functions have been introduced. In addition to their simple structure, these functions are robust with respect to arbitrary variations of kinetic constants, and only require mild assumptions on the reaction rates, with mass-action kinetics being a special case.

In this paper, we generalize this approach by transforming the problem of finding a Lyapunov function expressed in concentration coordinates for networks with arbitrary monotone kinetics into finding a common Lyapunov function for a linear parameter varying system written in reaction coordinates. Furthermore, several related Lyapunov functions are presented.  As a result, we link the PWLR Lyapunov functions introduced in \cite{MA_cdc13} with results known in literature for piecewise linear Lyapunov functions \cite{molchanov86,polanski95,blanchini95}. Furthermore, we interpret our results in terms of contraction analysis and variational dynamics. The existence of Lyapunov function has also strong algebraic and dynamical implications. Particularly, results on the persistence and the uniqueness of equilibria are also established.

 This paper is organized as follows. In Section 2, we present the background and assumptions. Section 3 defines what we mean by a Robust Lyapunov function, and develops the uncertain systems framework. PWLR Lyapunov functions are introduced in Section 4, uniqueness of equilibria and persistence are presented in Section 5. Section 6 discusses the relationship with contraction analysis. Proofs are collected in the Appendix. \\ Note that some of the results in Section 3 and 4 in this paper have been published preliminarily without proofs in \cite{MA_cdc14}.

\paragraph*{Notation} Let $A \subset \mathbb R^n$ be a set, then $A^\circ, \bar A, \partial A, \co A$ denote its interior, closure, boundary, and convex hull, respectively. Given $x \in \mathbb R^n$, a column vector, its $\ell_\infty$-norm is $\|x\|_{\infty} = \max_{1\le i \le n} |x_i|$. 
The inequalities $x\ge0,\, x>0,\, x\gg 0$ denote elementwise nonnegativity, elementwise nonnegativity with at least one positive element, and elementwise positivity, respectively. $A$ is a signature matrix if it is diagonal and the diagonal entries belong to $\{\pm 1\}$.  For $A \in \mathbb R^{n \times \nu}$, $\ker(A)$ denotes the kernel or null-space of $A$, while $\im (A)$ denotes the image space of $A$.  $A \in \mathbb R^{n \times n}$ is Metzler if all off-diagonal elements are nonnegative.
The set of $n \times n$ real symmetric matrices is denoted by $\mathbb S^{n}$. Let $A \in \mathbb S^{n}$, then $A \ge \!(>) 0$ denotes $A$ being positive semi-definite (definite), respectively. $A \succeq 0$ denotes  entrywise nonnegativity.
 The all-ones vector is denoted by $\mathbf 1$, where its dimension can normally be inferred from the context. Let $\{A_i\}_{i=1}^k \subset \mathbb R^{n \times m}$, its conic hull denotes the set $\{ \sum_{i=1}^k\lambda_i A_i: \lambda_i \in \bar{\mathbb R}_+ \}$.
Let $V:D \to \mathbb R$, then the kernel of $V$ is $\ker(V)=V^{-1}(0)$. \textbf{\textit{T}}$M$  is the tangent bundle of the differentiable manifold $M$.

 \section{Background on Chemical Reaction Networks}
The field of CRN dynamics has an established literature \cite{feinberg95,angeli09tut}. We review here the relevant background.
  A reaction network has two mathematical defining features: the \emph{stoichiometry} and the \emph{kinetics}. Informally speaking, stoichiometry describes the relative number of molecules of reactants and products involved whenever each reaction occurs, while kinetics is concerned with the relations that govern the velocity of transformation of reactants into products. We explain both below.
\subsection{Stoichiometry}
A Chemical Reaction Network (CRN) is defined by a set of species $\mathscr S=\{X_1,..,X_n\}$, and a set of reactions $\mathscr R=\{\R_1,...,\R_\nu\}$. Each reaction is denoted as:
\begin{equation}\label{e.reaction}
    \R_j: \quad \sum_{i=1}^n \alpha_{ij} X_i \longrightarrow \sum_{i=1}^n \beta_{ij} X_i, \ j=1,..,\nu,
\end{equation}
where $\alpha_{ij}, \beta_{ij}$ are nonnegative integers called \emph{stoichiometry coefficients}. The expression on the left-hand side is called the \emph{reactant complex}, while the one on the right-hand side is called the \emph{product complex.} The forward arrow refers to the idea that the transformation of reactants into products is only occurring in the direction of the arrow. 
In order to allow external inflows or outflow the reactant or product complex can both be empty, though not simultaneously. 

The stoichiometry of a network can be summarized by arranging the coefficients in an augmented matrix $ n \times 2\nu$ as:
\begin{equation}\label{e.AB} \tilde \Gamma = [ A | B] , \mbox{where \,} [A]_{ij}= \alpha_{ij}, [B]_{ij}= \beta_{ij}. \end{equation}

 The two matrices can be subtracted to yield an $n \times \nu$ matrix $\Gamma=[\gamma_1^T \ .. \gamma_n^T]^T$ called the \emph{stoichiometry matrix}, which is defined as $\Gamma=B-A$, or element-wise as:
\[    [\Gamma] _{ij} = \beta_{ij}- \alpha_{ij}.\]

A left null vector $d \in \mathbb R^n, d^T \Gamma=0$ with $d>0$ is said to be a \emph{conservation law}. If there exists a conservation law $d\gg 0$, the network is said to be \emph{conservative}. \\
A reaction $\mathbf R_j$ is said to be an \emph{input reaction} to $X_i$ if $\beta_{ij}>0$, and is said to be an \emph{output reaction} of $X_i$ if $\alpha_{ij}>0$.
Let $P \in \mathscr S$ be a non-empty set of species. Denote the set of output reactions of species in $P$ by $\Lambda(P)$. 
Then, a nonempty set $P \subset  \mathscr S$ is called a \emph{siphon} \cite{Angeli07} if each input reaction
to a species in $P$ is also an output reaction of species in $P$. A siphon is a \emph{deadlock} if $\Lambda(P)=\mathscr R$. A siphon or a deadlock is said to be \emph{critical} if it does not contain a set of species corresponding to the support of a conservation law.

\subsection{Kinetics}
Assume we have an isothermal well-stirred chemical reactor; this implies that the species are distributed uniformly in the reactor. In order to study kinetics, a nonnegative number $x_i $ is associated to each species $X_i$ to denote its \emph{concentration}. Assume that the chemical reaction $\R_j$ takes place continuously in time. A \emph{reaction rate} or velocity function $R_j:\Rnn \to \bar{\mathbb R}_+$ is assigned to each reaction.

A widely-used expression which originates from statistical thermodynamics is given as:
\begin{equation}\label{e.mass_action}
    R_j(x)= k_j \prod_{i=1}^n x_i^{\alpha_{ij}},
\end{equation}
(the so called Mass-Action kinetics), with the convention $0^0=1$, where $k_j, j=1,..,m$ are positive numbers known as the kinetic constants, and are usually highly uncertain. \\
The Mass-Action model is not a universal model since there are other models which are popular in systems biology like the Michaelis-Menten model, and the Hill model.

 In this work we will pursue ''kinetics-independent'' approach. More precisely, we assume that the reaction rate function of a CRN is unknown except for satisfying the following assumptions:
 \begin{enumerate}
   \item[\bf AK1.] it is a  $\mathscr C^1$ function, i.e. continuously differentiable;
   \item[\bf AK2.] $x_i =0 \Rightarrow R_j(x)=0$, for all $i$ and $j$ such that $\alpha_{ij} > 0$; 
   \item[\bf AK3.] it is nondecreasing with respect to its reactants, i.e
\begin{equation}\label{e.rate_mono}
    \frac{\partial R_j}{\partial x_i}(x)  \left \{\begin{array}{ll} \geq 0 & : \alpha_{ij} > 0 \\  =0 &: \alpha_{ij}=0 \end{array} \right . .
\end{equation}
\item[\bf AK4.] The inequality in \eqref{e.rate_mono} holds strictly for all $x \in \mathbb R_+^n$.
 \end{enumerate}

 Reaction rate functions satisfying AK1-AK4 are called \emph{admissible}. For a given stoichiometric matrices $A,B$, the set of admissible reactions is denoted by $\mathscr K_A$. A network family is the triple $\mathscr N_{A,B}=(\mathscr S, \mathscr R,\mathscr K_A)$.

\begin{remark}
 The assumptions above imply the monotonic dependence of the reaction rate on the concentration of its reactants. This captures the basic intuition about the nature of a reaction since, as the concentration of reactants increases, the likelihood of collision between molecules increases, and hence the rate of the reaction. Note that, in principle, a monotonically decreasing dependence on the reactants can also be accommodated to model \emph{inhibition}.
\end{remark}
\begin{remark}\label{rem.sign} It can be noted that if a reaction rate function is admissible, then the corresponding Jacobian $\partial R/\partial x$  exhibits a certain zero sign-pattern that can be read from the graph, and any matrix satisfying that pattern correspond to an admissible $R$.
\end{remark}

\subsection{Dynamics}
The dynamics of a CRN with $n$ species and $\nu$ reactions are described by a system of ordinary differential equations (ODEs) as:
\begin{equation}\label{e.ode}
    \dot x (t) = \Gamma R(x(t)), \ x(0) \in \Rnn
\end{equation}
where $x(t)$ is the concentration vector evolving in the nonnegative orthant  $\bar{\mathbb R}_+^n$, $\Gamma \in \mathbb R^{n \times \nu}$ is the stoichiometry matrix, $R(x(t))=[R_1(x(t)), R_2(x(t)), ..., R_\nu(x(t))]^T \in \bar{\mathbb R}_+^\nu$ is the reaction rates vector.

Note that \eqref{e.ode} belongs to the class of \emph{positive systems}, i.e, $\Rnn$ is forward invariant. In addition, the manifold $\mathscr C_{x_\circ}:=(\{x_\circ\}+ \mbox{Im}(\Gamma)) \cap \Rnn$ is forward invariant, and it is called \emph{the stoichiometric compatibility class} associated with $x_\circ$. Therefore, all stability results in this paper are relative to the stoichiometry compatibility class. Note that for a conservative network all stoichiometric classes are compact convex polyhedral sets.

 An equilibrium $x_e$ of \eqref{e.ode} is \emph{non-degenerate} if the Jacobian evaluated at $x_e$ relative to $\mathscr C_{x_e}$ is nonsingular. 
More precisely, considering changing coordinates using a transformation matrix $T= [ T_1^T  \ D ]^T,$
where $D^T$ has full row rank and $D^T \Gamma =0$, and $T_1$ is any matrix such that $T$ is nonsingular. Then, the Jacobian in the new coordinates can be written as:
\begin{align}\label{e.reduced_jacobian} T \Gamma \frac{\partial R}{\partial x} T^{-1} = \begin{bmatrix} J_1 & J_2 \\ 0 & 0 \end{bmatrix}. \end{align}
Therefore, $x_e$ is nondegenerate iff $J_1$ evaluated at $x_e$ is nonsingular. $J_1$ is called a \emph{reduced Jacobian}.

 The stoichiometry of the network will be assumed to satisfy the following assumption:
\begin{enumerate}
\item[\text {\textbf{AS}}] There exists $v \in \ker \Gamma$ such that $v\gg0$. This condition is necessary for the existence of a steady state in which all concentrations are positive.
\end{enumerate}

\section{Robust Lyapunov Functions and Linear Differential Inclusions}

%
\subsection{Robust Lyapunov Functions}
In order for the stability analysis of CRNs to be independent of the specific kinetics, we aim at constructing Lyapunov functions which are dependent only on the graphical structure, and hence are valid for all reaction rate functions that belongs to $\mathscr K_A$. Therefore, we state the following definition.

\begin{definition}[Robust Lyapunov Function]\label{def.rlf} Consider \eqref{e.ode} and let $x_e$ be an equilibrium. Let $\tV:\bar{\mathbb R}^q \to \bar{\mathbb R}_+$ be locally Lipschitz, and let $W_{R,x_e}:{\mathbb R}^n \to \mathbb R^q$ be a $\mathscr C^1$ function. Then, $(\tV,W_{R,x_e})$ is said to induce a \emph {Robust Lyapunov Function} (RLF) with respect to the network family $\mathscr N_{A,B}$ if for any choice of $R \in \mathscr K_A, x_e \in \Rnn$, the function $V_{R,x_e}=\tV \circ W_{R,x_e}$ is
\begin{enumerate}
 \item \emph {Positive-Definite}: $V_{R,x_e}(x)\ge0$, and $V_{R,x_e}(x)=0$ if and only if $R(x) \in \ker \Gamma$.
\item \emph {Nonincreasing}: $\dot V_{R,x_e} (x) \le 0$ for all $x \in \mathscr C_{x_e}$.
\end{enumerate}
A network for which an RLF exists is termed a {\bfseries Graphically Stable Network} (GSN).
\end{definition}
\begin{remark}As will be seen later, the function $\tV$ used in the definition of the Lyapunov function (through composition) is invariant with respect to the specific network realization in $\mathscr K_A$, while the function $W_{R,x_e}$ is allowed to depend on the kinetics of the network. Two main examples of the function $W_{R,x_e}$ are $W_{R,x_e}(x)=R(x)$, and $W_{R,x_e}(x)=x-x_e$.
With a mild abuse of terminology, we call the parameterized Lyapunov function $V_{R,x_e}$ an RLF.
\end{remark}
\begin{remark}The time-derivative in the definition above is the upper right Dini's derivative \cite{yoshizawa}:
\begin{equation}\label{e.dini} \dot V (x) := \limsup_{h \to 0^+} \frac{V(x+h \Gamma R(x) ) - V(x)}h, \end{equation}
 which is finite for all $x$ since $V$ is locally Lipschitz.
\end{remark}

Since the RLF defined above is not strict, we need the following definition.
\begin{definition}[The LaSalle's Condition]\label{def.lasalle_rlf}
An RLF $V_{R,x_e}$ for $\mathscr N_{A,B}$ is said to satisfy \emph{the LaSalle's Condition} if for any choice $R \in \mathscr K_A$ the following statement holds. \\ If a solution $\varphi(t;x_\circ)$ of \eqref{e.ode} satisfies $ \varphi(t;x_\circ) \in \ker \dot V \cap \mathscr C_{x_e}$, $t \ge 0$, then this implies that $\varphi(t;x_\circ) \in E_{x_\circ}$ for all $t \ge 0$, where $E_{x_\circ} \subset \mathscr C_{x_\circ}$ is the set of equilibria for \eqref{e.ode} contained in  $\mathscr C_{x_\circ}$.\end{definition}

The following theorem adapts Lyapunov's second method \cite{yoshizawa,PWLRj} to our context.
\begin{theorem}[Lyapunov's Second Method]\label{th.lyap_rlf}
  Given \eqref{e.ode} with initial condition $x_\circ \in \mathbb R_+^n$, and let $\mathscr C_{x_\circ}$ as the associated stoichiometric compatibility class. Assume there exists an RLF Lyapunov function and suppose that $x(t)$ is bounded,
  \begin{enumerate}
  \item Then the equilibrium set $E_{x_\circ}$ is Lyapunov stable.
   \item If, in addition, $V$ satisfies the LaSalle's Condition, then
   $x(t) \to E_{x_\circ}$ as $t \to \infty$ (i.e., the point to set distance of $x(t)$ to $E_{x_\circ}$ tends to $0$). Furthermore, any isolated equilibrium relative to $\mathscr C_{x_\circ}$ is asymptotically stable.
   \item  If $V$ satisfies the LaSalle's Condition, and all the trajectories are bounded, then: if there exists $x^* \in E_{x_\circ}$, which is isolated relative to $\mathscr C_{x_\circ}$ then it is unique, i.e., $E_{x_\circ}=\{x^*\}$. Furthermore, it is a globally asymptotically stable equilibrium relative to $\mathscr C_{x_\circ}$.
   \end{enumerate}
 \end{theorem}

\begin{remark} Note that the RLF considered can not be used to establish boundedness of solutions, as it may fail to be proper.  Therefore, we need to resort to other methods so that boundedness can be guaranteed. For instance, if the network is conservative, i.e the exists $w \in \mathbb R_+^n$ such that $w^T\Gamma =0$, which ensures the compactness of $\mathscr C_{x_\circ}$.
\end{remark}

\subsection{Uncertain Systems Framework: Reaction Coordinates}
In this subsection  the function $W_{R,x_e}$ is assigned to be $R$, as in the case of PWLR functions.
As arbitrary monotone kinetics are allowed in our formulation of the CRN family $\mathscr N_{A,B}$, the system \eqref{e.ode} with kinetics $\mathscr K_A$ can be viewed as an uncertain system. However, this system is not in the form of the traditional types of parameter uncertainties treated in the literature. In this subsection, we show that shifting the analysis of the system to reaction coordinates enables  to view it as a \emph{linear parameter varying} (LPV) system for which existence of a common Lyapunov function directly yields a robust Lyapunov function for the original CRN.

Let $r(t):=R(x(t))$, then we have:
\begin{equation}\label{e.Rode}
  \dot r(t) = \frac{\partial R}{\partial x}(x(t)) \Gamma r(t) = \rho(t) \Gamma r(t),
\end{equation}
   where $\rho(t):=\frac{\partial R}{\partial x}(x(t))$. We can write $\rho (t)$ as a conic combination of individual partial derivatives as follows:
   \begin{equation}\label{e.rho_decomp}
  \frac{\partial R}{\partial x}(x(t))=\rho (t) = \sum_{i,j:\alpha_{ij}>0} \rho_{ji}(t) E_{ji},
\end{equation}
where $[\rho(t)]_{ji}=\rho_{ji}(t)$, and $[E_{ji}]_{j'i'}=1$ if $(j',i')=(j,i)$ and zero otherwise.\\ Let $s$ denote the number of elements in the support of $\partial R/\partial x$, and let $\kappa:\{1,..,s\} \to \{ (i,j):\alpha_{ij}>0 \}$ be an indexing map.  Then, we can write \eqref{e.Rode} as:
\begin{equation}\label{e.lpv}
  \dot r = \sum_{i,j:\alpha_{ij}>0} \rho_{ji}(t) E_{ji} \Gamma r=\sum_{\ell=1}^s \rho_\ell(t) \Gamma^{\ell}  r,
\end{equation}
where $\Gamma^{\ell}=e_j\gamma_i^T $, $\rho_{\ell}(t) = \rho_{ji} (t) $, with $(i,j)=\kappa(\ell)$, and $\{e_j\}_{j=1}^\nu$ denotes the canonical basis of $\mathbb R^\nu$.  Hence, equation \eqref{e.lpv} represents a linear parameter-varying system which has $s$ nonnegative time-varying parameters $\{\rho_1(t),..,\rho_s(t)\}$ and the system matrix belongs to the conic hull of the set of rank-one matrices $\{\Gamma^{1},...,\Gamma^s \}$.

 Hence, we have the following definition.
 \begin{definition}[Common Lyapunov Function]\label{def.commonLyap}A function $\tV: \bar{\mathbb R}_+^\nu \to  \bar{\mathbb R}_+$ is said to be a Lyapunov function for the linear system $\dot r =\Gamma ^\ell r$ if it is locally Lipschitz, nonnegative, has a negative semi-definite time-derivative along the trajectories of the linear system, and $\ker \tV \subset \ker \Gamma^\ell$.  Furthermore, $\tV$ is said to be a \emph{common Lyapunov function} for the set of linear systems $\{\dot r=\Gamma^{1}  r,...,\dot r=\Gamma^{s}  r \}$ if it is a Lyapunov function for each of them, and $\ker \tV = \bigcap_{\ell=1}^s \ker \Gamma^{\ell}$.
 \end{definition}

 Equipped with the above definitions, we are ready to state the main result for this Section.  Its proof is deferred to the appendix for the sake of readability.

\begin{theorem}[Equivalence btw. CLF and RLF in reaction coordinates]\label{th.uncertain} Given the system \eqref{e.ode}. There exists a common Lyapunov function $\tV: \bar{\mathbb R}_+^\nu \to  \bar{\mathbb R}_+$ for the set of linear systems $\{\dot r=\Gamma^{1}  r,...,\dot r=\Gamma^{s}  r \}$ if and only if $(\tV,R)$ induces the Robust Lyapunov function parameterized as $V_R(x)=\tV(R(x))$ for the CRN family $\mathscr N_{A,B}$.
\end{theorem}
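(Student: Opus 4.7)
The plan is to prove the two implications separately, with the LPV decomposition $\dot r = \sum_{\ell=1}^s \rho_\ell(t)\Gamma^\ell r$, $\rho_\ell(t)\ge 0$, serving as the bridge between the concentration and reaction viewpoints.

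For the \emph{sufficiency} direction (common Lyapunov function $\Rightarrow$ RLF), fix $\tilde V$ and any $R\in\mathscr K_A$, and set $V_R=\tilde V\circ R$. To obtain positive-definiteness I would verify that $\bigcap_{\ell=1}^s \ker \Gamma^\ell=\ker\Gamma$ on $\bar{\mathbb R}_+^\nu$: since $\Gamma^\ell=e_j\gamma_i^T$ with $(i,j)=\kappa(\ell)$, membership in the intersection is equivalent to $\gamma_i^T r=0$ for every species $i$ appearing as a reactant, and this exhausts all rows of $\Gamma$ under the standing assumption that each species participates in some reactant complex. For the nonincrease condition, differentiate along trajectories of \eqref{e.ode}: using \eqref{e.lpv} one gets $\dot R(x)=\sum_\ell \rho_\ell \Gamma^\ell R(x)$ with $\rho_\ell=\partial R_j/\partial x_i\ge 0$, and then appeal to the subadditivity and positive homogeneity of the upper right Dini directional derivative of $\tilde V$ to write
\[
\dot V_R(x)=D^+\tilde V\bigl(r;\textstyle\sum_\ell \rho_\ell \Gamma^\ell r\bigr) \le \sum_\ell \rho_\ell\, D^+\tilde V(r;\Gamma^\ell r)\le 0,
\]
where each summand on the right is nonpositive precisely because $\tilde V$ is a Lyapunov function for $\dot r=\Gamma^\ell r$.

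For the \emph{necessity} direction (RLF $\Rightarrow$ common Lyapunov function), suppose $(\tilde V,R)$ induces an RLF for every $R\in\mathscr K_A$. To extract the Lyapunov property for a single $\Gamma^\ell$ with $(i,j)=\kappa(\ell)$, I would construct a sequence of kinetics $R^{(m)}\in \mathscr K_A$ so that at a prescribed interior point $x^\star$ the Jacobian $\partial R^{(m)}/\partial x(x^\star)$ concentrates on the $(j,i)$ entry as $m\to\infty$ (e.g., by rescaling each $R_{j'}$ to depend arbitrarily weakly on all species other than the chosen reactant while respecting A1--A4, and preserving strict monotonicity on the interior by adding a small smooth perturbation that vanishes in the limit). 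Passing to the limit in the inequality $\dot V_{R^{(m)}}(x^\star)\le 0$ and using the Lipschitz continuity of $D^+\tilde V(r;\cdot)$ in its direction yields $D^+\tilde V(r^\star;\Gamma^\ell r^\star)\le 0$. Sweeping $x^\star$ over a set whose image under $R$ covers $\bar{\mathbb R}_+^\nu$ propagates the inequality everywhere. The kernel identity $\ker\tilde V=\bigcap_\ell \ker\Gamma^\ell$ follows from the RLF kernel condition $\ker V_R=\{x:R(x)\in\ker\Gamma\}$ together with the identity $\bigcap_\ell \ker\Gamma^\ell=\ker\Gamma$ established above.

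The main obstacle is the subadditivity of $D^+\tilde V(r;\cdot)$ used in the sufficiency step: it is not automatic for an arbitrary locally Lipschitz $\tilde V$ and genuinely requires a convexity-type hypothesis such as convexity or Clarke regularity of $\tilde V$, which is natural for the piecewise-linear and norm-like candidates used later in the paper; this property will need to be either assumed or verified for the class of $\tilde V$ of interest. The secondary difficulty is the realization step in the necessity direction, where kinetics must be designed to isolate a single rank-one generator $\Gamma^\ell$ while staying inside $\mathscr K_A$, which cannot be done exactly under A4 and therefore proceeds by approximation together with continuity of the Dini derivative.
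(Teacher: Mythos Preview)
Your overall architecture---two implications linked by the LPV decomposition $\dot r=\sum_\ell\rho_\ell\Gamma^\ell r$---is exactly what the paper does, and your kernel computation $\bigcap_\ell\ker\Gamma^\ell=\ker\Gamma$ matches as well.

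The substantive divergence is in the sufficiency step, precisely at the obstacle you flag. You rely on subadditivity of $D^+\tilde V(r;\cdot)$ in the direction argument, which indeed fails for a general locally Lipschitz $\tilde V$. The paper does \emph{not} assume convexity or Clarke regularity; instead it proves a separate lemma (via Rademacher's theorem and the Clarke generalized gradient) saying that if $(\partial V/\partial x)f(x)\le 0$ holds at every point where the gradient exists, then the Dini derivative $\dot V(x)\le 0$ holds everywhere. At points of differentiability of $\tilde V$ the chain rule is linear, so $\dot V(x)=\sum_\ell\rho_\ell\,(\partial\tilde V/\partial r)\Gamma^\ell R(x)$ is nonpositive term by term with no subadditivity needed; the lemma then transports the inequality to the nondifferentiable points. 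Thus the obstacle you identified is a limitation of your proof route, not of the theorem, and the paper's Clarke-gradient lemma is the missing ingredient.

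For necessity your approximation-and-limit construction is heavier than required. The paper argues by contradiction directly at a point of differentiability: if $(\partial\tilde V/\partial r)\Gamma^{\ell_0}r>0$ for some $\ell_0$ and some $r=R(x)$, then because $\mathscr K_A$ allows arbitrary positive values of $\partial R_{j_0}/\partial x_{i_0}(x)$, one simply picks $R$ with $\rho_{\ell_0}$ large enough that the offending term dominates the sum, forcing $\dot V(x)>0$. No limiting sequence, no continuity of $D^+\tilde V$ in the direction, and no surjectivity-of-$R$ argument are needed.
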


\begin{remark} Since the zero matrix belongs to the conic hull of $\{\Gamma^1,...,\Gamma^s\}$, asymptotic stability can't be established by the mere existence of the common Lyapunov function. A LaSalle's argument is needed as will be mentioned in the following Section.
\end{remark}

\begin{remark}Although the dynamics of concentrations \eqref{e.ode} or the dynamics of the extent of reaction \eqref{e.extent} define positive systems, the differential linear inclusion that can be defined from \eqref{e.Rode} is not a positive linear differential inclusion. More precisely, this means that it is not necessarily true that all the matrices that belong to conic hull $\{\Gamma^{1},...,\Gamma^{s}\}$ are Metzler.
\end{remark}

\subsection{Dual Robust Lyapunov Function: Species Coordinates}
The RLF introduced in the previous subsection is a function of $R(x)$. We investigate now RLFs that are functions of the difference $x-x_e$. This is carried out in a manner that is dual to what has been done in the previous subsection.

In order to present the dual framework, an alternative representation of the system dynamics can be adopted. Consider a CRN as in \eqref{e.ode}, and let $x_e$ be an equilibrium.  Then, there exists $x{''}(x) \in \bar{\mathbb{R}}_+^n$ such that \eqref{e.ode} can written equivalently as:
\begin{equation}\label{e.alternative_representation}\dot x= \Gamma \frac{\partial R}{\partial x}(x{''}) (x-x_e),   x(0) \in \mathscr C_{x_e} \end{equation}

The existence of $x'':=x_e+ \varepsilon_x (x-x_e)$  for some $\varepsilon_x \in [0,1]$ follows by applying the Mean-Value Theorem to $R(x)$ along the segment joining  $x_e$ and $x$.

Let $z=x-x_e$, then similar to the previous section, the conic combination \eqref{e.rho_decomp} can be used to rewrite \eqref{e.alternative_representation} as:
\begin{equation}\label{e.rho_dual}\dot z= \Gamma \frac{\partial R}{\partial x}(x{''}) (x-x_e)= \sum_{\ell=1}^s \rho_{\ell}(t) \Gamma E^{\ell} z = \sum_{\ell=1}^s \rho_{\ell}(t) \Gamma_{i_\ell} e_{j_\ell}^T z, \end{equation}
where $\rho_\ell(t)=\frac{\partial R_{j_\ell}}{\partial x_{i_\ell}}(x''(x(t))$, and $\Gamma_i$ is the $i^{\mathrm{th}}$ column of $\Gamma$. Therefore, the system dynamics has been embedded in the linear differential inclusion with vertices $\{\Gamma_{i_1}e_{j_1}^T , ..., \Gamma_{i_s} e_{j_s}^T\}$.

Let $D^T$ be a matrix with columns that are the basis vectors of $\ker \Gamma^T$. The following theorem can be stated.
\begin{theorem}[Equivalence CLF and RLF, species coordinates]\label{th.uncertain_dual} Given the system \eqref{e.ode}. There exists a common Lyapunov function $\hV: {\mathbb R}^n \to  \bar{\mathbb R}_+$ for the set of linear systems $\{\dot z= (\Gamma_{i_1} e_{j_1}^T) z , ..., \dot z= (\Gamma_{i_s} e_{j_s}^T) z   \}$, on the invariant subspace $\{z: D^T z=0\}$ if and only if $(\tV,W_{x_e}),W_{x_e}=x-x_e$ induces the Robust Lyapunov function parameterized as $V_{x_e}(x)=\hV(x-x_e)$ for the CRN family $\mathscr N_{A,B}$.
\end{theorem}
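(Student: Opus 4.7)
The plan is to mirror the proof of Theorem~\ref{th.uncertain}, translating every step from reaction coordinates $r$ to species-displacement coordinates $z = x - x_e$. Two preliminary observations frame the argument. First, because each column of $D$ lies in $\ker \Gamma^T$, we have $D^T \Gamma_{i_\ell} = 0$ for every $\ell$, so each vertex system $\dot z = (\Gamma_{i_\ell} e_{j_\ell}^T) z$ preserves the constraint $D^T z = 0$; this is exactly what makes the linear inclusion restrictable to the tangent space of the stoichiometric compatibility class $\mathscr C_{x_e}$. Second, the mean-value decomposition \eqref{e.rho_dual} expresses the CRN velocity $\dot z$ as a \emph{conic} combination of the vertex fields with nonnegative weights $\rho_\ell(t) = (\partial R_{j_\ell}/\partial x_{i_\ell})(x'')$, where $x''$ lies on the segment from $x_e$ to $x(t)$.

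For sufficiency ($\Leftarrow$), assume $\hV$ is a common Lyapunov function for all vertex systems subject to $D^T z = 0$. Given any $R \in \mathscr K_A$ and trajectory $x(t)$ starting in $\mathscr C_{x_e}$, the Dini derivative of $V_{x_e}(x) = \hV(z)$ along the flow is
\begin{equation*}
\dot V_{x_e}(x) \;=\; \limsup_{h \to 0^+} \frac{\hV\bigl(z + h\,\textstyle\sum_\ell \rho_\ell(t)\,(\Gamma_{i_\ell} e_{j_\ell}^T)\, z\bigr) - \hV(z)}{h}.
\end{equation*}
Since $\rho_\ell \ge 0$, the direction $\dot z$ is a conic combination of the vertex directions. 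Using positive homogeneity of the Dini derivative in its direction argument together with a subadditivity property of $\hV$ (automatic for quadratic or polyhedral Lyapunov functions of linear systems, and in particular for the PWLR class targeted later in the paper), one obtains $\dot V_{x_e}(x) \le \sum_\ell \rho_\ell\, D^+ \hV(z;(\Gamma_{i_\ell} e_{j_\ell}^T) z) \le 0$. The positive-definiteness clause of Definition~\ref{def.rlf} follows from the kernel equality $\ker \hV = \bigcap_\ell \ker(\Gamma_{i_\ell} e_{j_\ell}^T)$ restricted to $\ker D^T$, which in turn is equivalent to $\Gamma \rho\, z = 0$ for every admissible $\rho$ and thus to $R(x) \in \ker \Gamma$.

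For necessity ($\Rightarrow$), fix $\ell$ and a nonzero direction $z_0$ with $D^T z_0 = 0$. I would construct a one-parameter family of kinetics $R_\varepsilon \in \mathscr K_A$ admitting $x_e$ as equilibrium, whose Jacobian at $x_e$ is dominated by the $(j_\ell,i_\ell)$ entry while all other admissible entries are $O(\varepsilon)$. A natural candidate is a separable rate $R_{\varepsilon,j}(x) = c_j \prod_{i:\alpha_{ij}>0} \phi_{ij}^{(\varepsilon)}(x_i)$ where $\phi_{ij}^{(\varepsilon)}$ has derivative of order one only at the target entry and of order $\varepsilon$ elsewhere, with the constants $c_j$ tuned so that $\Gamma R_\varepsilon(x_e)=0$. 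Applying the RLF inequality $\dot V_{x_e} \le 0$ at $x = x_e + \delta z_0$ with $\delta \to 0^+$ (so $x'' \to x_e$) and then letting $\varepsilon \to 0$ isolates the $\ell$-th vertex and yields $D^+ \hV(z_0;(\Gamma_{i_\ell} e_{j_\ell}^T) z_0) \le 0$. The kernel inclusion needed to match Definition~\ref{def.commonLyap} follows analogously from the positive-definite clause of Definition~\ref{def.rlf}.

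The main obstacle, I expect, is the \emph{kinetics realization lemma} underlying the necessity direction: given the sign pattern dictated by $A$ (assumption A3) and the strict monotonicity on the positive orthant (assumption A4), one must exhibit $R_\varepsilon \in \mathscr K_A$ with essentially arbitrary Jacobian pattern at a prescribed equilibrium $x_e$. The construction must simultaneously satisfy $\Gamma R_\varepsilon(x_e)=0$, keep $\partial R_{\varepsilon,j_\ell}/\partial x_{i_\ell}$ of order one, make the other admissible partials $O(\varepsilon)$, and preserve strict positivity of the latter to respect A4; the bookkeeping is delicate. A secondary technical point is the careful handling of the iterated limit in the Dini derivative, which relies on the local Lipschitz continuity of $\hV$ and the joint continuity of $\rho(x'')$ in $(x,\varepsilon)$.
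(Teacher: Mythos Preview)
Your overall plan---mirror the proof of Theorem~\ref{th.uncertain} with $z=x-x_e$ in place of $r$---is exactly what the paper does, and your treatment of the constraint $D^Tz=0$ and of the necessity direction (dominating one partial derivative to isolate a single vertex) is in the same spirit as the paper's terse ``choose $\rho_\ell$ large enough'' argument; if anything, you are more careful about the realization details than the paper itself.

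There is, however, a genuine gap in your sufficiency argument. You decompose the direction $\dot z$ as a conic sum of vertex directions and then invoke ``subadditivity'' of the Dini directional derivative to conclude
\[
\dot V_{x_e}(x)\;\le\;\sum_{\ell}\rho_\ell\, D^{+}\hV\bigl(z;(\Gamma_{i_\ell}e_{j_\ell}^T)z\bigr).
\]
For a merely locally Lipschitz $\hV$, the map $v\mapsto D^{+}\hV(z;v)$ need \emph{not} be subadditive; that property is special to convex (e.g.\ polyhedral or quadratic) functions, as you yourself note in the parenthesis. Since Definition~\ref{def.commonLyap} only assumes local Lipschitz continuity, your inequality is unjustified at the stated level of generality. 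The paper avoids this issue entirely: it works only at points where $\hV$ is differentiable, where
\[
\dot V \;=\; \frac{\partial \hV}{\partial z}\sum_{\ell}\rho_\ell\,\Gamma_{i_\ell}e_{j_\ell}^T z
\;=\;\sum_{\ell}\rho_\ell\,\frac{\partial \hV}{\partial z}\,\Gamma_{i_\ell}e_{j_\ell}^T z\;\le\;0,
\]
since each summand is nonpositive by the common-Lyapunov assumption (at differentiability points the Dini derivative coincides with the gradient inner product). The extension to \emph{all} $z$ then comes from Lemma~\ref{lem.lip} (Rademacher plus the Clarke gradient), not from any convexity of $\hV$. Replacing your subadditivity step with this ``differentiable a.e.\ $+$ Lemma~\ref{lem.lip}'' device closes the gap and aligns your proof with the paper's.
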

\begin{proof}
Let $V(x)=\hV(x - x_e)$, then when $\partial \hV/\partial z$ exists we can write:
\begin{align*} \dot V &= \frac{\partial \hV}{\partial z} \dot z= \frac{\partial \hV}{\partial z} \sum_{\ell=1}^s \rho_{\ell}(t) \Gamma_{i_\ell} e_{j_\ell}^T z =  \sum_{\ell=1}^s \rho_{\ell}(t) \left ( \frac{\partial \hV}{\partial z}\Gamma_{i_\ell} e_{j_\ell}^T z \right ). \end{align*}
Since we have assumed that $\hV$ is a common Lyapunov function for the set of linear systems $\{\dot z= (\Gamma_{i_1} e_{j_1}^T) z , ..., \dot z= (\Gamma_{i_s} e_{j_s}^T) z   \}$ the proof can proceed in both directions in a similar way to the proof of Theorem \ref{th.uncertain}. Notice that the constraint $D^T z =0$ is needed since $D^T \dot x(t) \equiv 0$ is implicit in the structure of the original system \eqref{e.ode}.
\end{proof}


\subsection{Relationship Between the Two Frameworks}

 We show next that if $\tV$ used in reaction coordinates satisfies a relatively mild additional assumption, then the Lyapunov function of the form $\hV(x-x_e)$ can be used, where $x_e$ is an equilibrium point for \eqref{e.ode}.

To this end, the following theorem can be stated.
\begin{theorem}[Converting Lyapunov functions between reaction and species coordinates]\label{th.dual_uncertain} Let $V_1(x)=\tV(R(x))$ represent an RLF for the network family $\mathscr N_{A,B}$. If there exists $\hV: \mathbb{R}^n \to \bar{\mathbb R}_+$ such that for all
 $r \in \mathbb{R}^{\nu}$:
\color{black}
\begin{equation}\label{e.two_lyap}
\tV(r)=\hV(\Gamma r),
\end{equation}
then $V_2(x)=\hV(x-x_e)$ represents an RLF for the network family $\mathscr N_{A,B}$, where $x_e$ is any equilibrium point for \eqref{e.ode}.
\end{theorem}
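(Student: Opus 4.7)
My plan is to verify the three conditions of Definition~\ref{def.rlf} for $V_2(x)=\hV(x-x_e)$ along trajectories of \eqref{e.ode} in $\mathscr C_{x_e}$: nonnegativity, the zero-set characterisation, and the Dini inequality $\dot V_2\le0$. Since $V_1=\tV\circ R$ is already an RLF by hypothesis, Theorem~\ref{th.uncertain} provides that $\tV$ is a common Lyapunov function for the LPV family $\{\dot r=\Gamma^\ell r\}_{\ell=1}^{s}$, whence $\tV\ge0$, $\ker\tV=\bigcap_\ell\ker\Gamma^\ell$, and $D\tV(r)\cdot\Gamma^\ell r\le0$ on $\bar{\mathbb R}^\nu_+$ for each $\ell$. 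Under the standard assumption that every species appears as a reactant in some reaction, $\bigcap_\ell\ker\Gamma^\ell=\ker\Gamma$.

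Nonnegativity and the zero-set condition follow quickly from $\tV=\hV\circ\Gamma$ together with assumption AG. AG supplies $v\gg0$ in $\ker\Gamma$, and this forces $\Gamma\bar{\mathbb R}^\nu_+=\operatorname{Im}\Gamma$: for any $x\in\mathscr C_{x_e}$ one may then pick $r\ge0$ with $\Gamma r=x-x_e$ and write $V_2(x)=\hV(\Gamma r)=\tV(r)\ge0$, with equality iff $r\in\ker\tV=\ker\Gamma$, i.e.\ iff $x=x_e$. This matches $R(x)\in\ker\Gamma$ in the unique-equilibrium situation in which the dual framework is employed.

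The heart of the proof is the Dini inequality. Writing $z=x-x_e$ and using $\Gamma R(x_e)=0$, the integral mean value theorem applied componentwise to $R$ gives $R(x)-R(x_e)=A(z)z$ with $A(z):=\int_0^1\frac{\partial R}{\partial x}(x_e+sz)\,ds$ having nonnegative entries by A3. Expanding $A(z)$ in the basis of elementary matrices as in \eqref{e.rho_decomp}--\eqref{e.rho_dual} yields
\[
\dot z=\Gamma A(z)z=\sum_{\ell=1}^{s}\hat\rho_\ell(z)\,\Gamma_{\cdot,j_\ell}\,z_{i_\ell},\qquad \hat\rho_\ell(z)\ge0,
\]
where $\Gamma_{\cdot,j_\ell}$ denotes the $j_\ell$-th column of $\Gamma$, hence
\[
\dot V_2=\sum_{\ell=1}^{s}\hat\rho_\ell(z)\bigl(D\hV(z)\cdot\Gamma_{\cdot,j_\ell}\bigr)z_{i_\ell}.
\]
Differentiating the hypothesis $\tV(r)=\hV(\Gamma r)$ by the chain rule produces the key identity $(\partial\tV/\partial r_{j_\ell})(r)=D\hV(\Gamma r)\cdot\Gamma_{\cdot,j_\ell}$; substituting $z=\Gamma r$ in the common-Lyapunov inequality $(\partial\tV/\partial r_{j_\ell})(r)\,(\Gamma r)_{i_\ell}\le0$ therefore delivers $(D\hV(z)\cdot\Gamma_{\cdot,j_\ell})z_{i_\ell}\le0$ for every $z\in\Gamma\bar{\mathbb R}^\nu_+=\operatorname{Im}\Gamma$. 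Since $x-x_e\in\operatorname{Im}\Gamma$ for $x\in\mathscr C_{x_e}$ and each $\hat\rho_\ell(z)\ge0$, termwise nonpositivity of the summands yields $\dot V_2\le0$.

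The principal obstacle I anticipate is the nonsmoothness of $\hV$: it is only locally Lipschitz, so the chain-rule manipulation must be reinterpreted via the upper Dini derivative \eqref{e.dini}. I would replay the directional-derivative argument already used in the proof of Theorem~\ref{th.uncertain} to validate the termwise inequality pointwise and then pass to a limsup along solutions. The geometric identification $\Gamma\bar{\mathbb R}^\nu_+=\operatorname{Im}\Gamma$, which relies squarely on AG, is the second non-trivial ingredient that allows the reaction-coordinate inequality to transfer verbatim to species coordinates.
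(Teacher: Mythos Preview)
Your proof is correct and follows essentially the same route as the paper: both pick a nonnegative preimage $r$ of $x-x_e$ under $\Gamma$ (via AG), invoke the chain-rule identity $\partial\tV/\partial r=(D\hV)\Gamma$, expand the dynamics through a mean-value/rank-one decomposition, and apply the common-Lyapunov inequality termwise before appealing to Lemma~\ref{lem.lip} for the nonsmooth case. Your integral mean-value step is in fact cleaner than the paper's pointwise formulation \eqref{e.alternative_representation}, and you also explicitly flag the unique-equilibrium caveat in the zero-set condition that the paper's ``clearly satisfied'' glosses over.
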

\begin{proof} Condition 1 in Definition \ref{def.rlf} is clearly satisfied. It remains to show the second condition. Let $z=x - x_e$. Then, whenever $\hV$ is differentiable:
\[ \dot{V_2}(x)= \frac{\partial \hV(x-x_e)}{\partial z} \dot x = \frac{\partial \hV(x-x_e)}{\partial z} \Gamma R(x), \]
Before proceeding, we prove two statements: First, from \eqref{e.two_lyap}, we get $(\partial \tV(r) /\partial r)= (\partial \hV(\Gamma r) /\partial z) \Gamma$. Second, note that $x-x_e \in \im(\Gamma)$, hence there exists $R \in \mathbb{R}^{\nu}$ such that $\Gamma R = x - x_e$, where $R$ can always be chosen nonnegative by assumption AS.  Hence, where $\hV$ is differentiable, we can use \eqref{e.alternative_representation} to write:
\begin{align*} \dot{V_2}(x) &=  \frac{\partial \hV(x-x_e)}{\partial r} \Gamma  \frac{\partial R(x'')}{\partial x} (x-x_e)
 = \frac{\partial \tV(R)}{\partial r}   \frac{\partial R(x'')}{\partial x} \Gamma R \\ & = \sum_{\ell=1}^s \rho_{\ell} \frac{\partial \tV(R)}{\partial r} \Gamma^\ell R \le 0, \end{align*}
where the last inequality follows from \eqref{e.uncertain_proof}. Lemma \ref{lem.lip} implies that $\dot V_2(x) \le 0$ for all $x$. \end{proof}

\begin{remark}\label{dual_lyap_weakness} $V_2$ has a simpler structure than $V_1$ since it depends on $x-x_e$. However, it can be noted in the proof that for each specific choice of $x_e$, the Lyapunov function $\hat V(x-x_e)$ is nonincreasing only along solutions contained in $\mathscr C_{x_e}$.
\end{remark}

\color{black}
\paragraph{\bf Relationship to the Extent of Reaction Formulation} \strut\\
Recall that the extent of reaction \cite{clarke80} is defined as: $\xi(t)=\int_{0}^t R(x(\tau)) d\tau + \xi(0)$.
 If $x(t) \in \mathscr C_{x_e}$, then $\exists \xi^* \ge 0$ such that $x_e-x_\circ=\Gamma \xi^*$. We set $\xi(0):= \xi^*$. Hence, we can write:
\begin{equation}\label{e.extent}
\Gamma\xi(t) = x(t)-x_e,
\end{equation}
and
\begin{equation}\label{e.extent_ode_e}
\dot\xi = R(x_e + \Gamma \xi ), \xi(0):= \xi^*,
\end{equation}
which is the extent-of-reaction ODE representation of the dynamics of the CRN.

Therefore, we state the following result.

\begin{corollary}\label{th.four_lyap} Let $\Gamma$ be given and $\tV: \bar{\mathbb R}_+^\nu \to \bar{\mathbb R}_+$ be a function. Assume there exists $\hV: \mathbb R_+^n \to \bar{\mathbb R}_+$ such that for all 
$r \in \mathbb{R}^{\nu}$, \color{black}
$\tV(r)=\hV(\Gamma r)$. If $\tV$ is a common Lyapunov function for $\{\dot r= e_{j_1} \gamma_{i_1}^T r, ..., \dot r= e_{j_s} \gamma_{i_s}^T r  \}$ where $(i_\ell,j_\ell)=\kappa(\ell)$ for all $\ell \in \{1,2, \ldots, s \}$, then
   $\tV(\xi)$ is nonnegative, and nonincreasing along the trajectories of $\dot \xi = R(x_e+\Gamma \xi)$ for any $R \in \mathscr K_{A}$.
\end{corollary}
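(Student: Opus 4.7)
The plan is to express $\tV$ evaluated along an extent-of-reaction trajectory as $V_2 = \hV(\cdot - x_e)$ evaluated along the corresponding concentration trajectory, then invoke the RLF property of $V_2$ inherited from Theorem~\ref{th.dual_uncertain}. The enabling identity is \eqref{e.extent}, which couples the two coordinate systems via $\Gamma\xi(t) = x(t)-x_e$.

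First I would use Theorem~\ref{th.uncertain} to convert the common Lyapunov function hypothesis on $\tV$ (for the family $\{\dot r = e_{j_\ell}\gamma_{i_\ell}^T r\}_{\ell=1}^s$) into the statement that $V_1(x) = \tV(R(x))$ is an RLF for the family $\mathscr N_{A,B}$. Then, because of the factorization $\tV(r) = \hV(\Gamma r)$ posited in the corollary, Theorem~\ref{th.dual_uncertain} immediately yields that $V_2(x) = \hV(x-x_e)$ is also an RLF for $\mathscr N_{A,B}$ whenever $x_e$ is an equilibrium of \eqref{e.ode}. In particular $V_2$ is nonincreasing along any trajectory of \eqref{e.ode} starting in $\mathscr C_{x_e}$.

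To finish, consider any solution $\xi(t)$ of $\dot\xi = R(x_e + \Gamma\xi)$ and set $x(t) := x_e + \Gamma\xi(t)$. Then $\dot x = \Gamma\dot\xi = \Gamma R(x)$, so $x(t)$ is a trajectory of \eqref{e.ode} lying in $\mathscr C_{x_e}$ by construction. Applying the factorization to $r = \xi(t)$ and invoking \eqref{e.extent} yields the pointwise identity
\[
\tV(\xi(t)) \;=\; \hV(\Gamma\xi(t)) \;=\; \hV(x(t)-x_e) \;=\; V_2(x(t)),
\]
so the scalar functions $t \mapsto \tV(\xi(t))$ and $t \mapsto V_2(x(t))$ coincide; the nonincreasingness of $V_2$ along $x(t)$ transfers verbatim to $\tV$ along $\xi(t)$, and nonnegativity is immediate from the codomain of $\tV$. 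I do not expect any real obstacle: the corollary is essentially bookkeeping that composes Theorems~\ref{th.uncertain} and~\ref{th.dual_uncertain} through the extent-of-reaction identity. The only mild point of care is that $\tV$ and $\hV$ are only locally Lipschitz, so derivatives should be interpreted in the Dini sense of \eqref{e.dini}; but because the two sides of the displayed identity agree as functions of $t$, their Dini derivatives agree pointwise and the conclusion follows without additional work.
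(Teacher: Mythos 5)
Your proposal is correct and follows essentially the same route as the paper's own proof: it composes Theorems~\ref{th.uncertain} and~\ref{th.dual_uncertain} to get that $\hV(x-x_e)$ is nonincreasing along trajectories of \eqref{e.ode} in $\mathscr C_{x_e}$, and then transfers this to $\tV(\xi)$ via the identity $\tV(\xi)=\hV(\Gamma\xi)=\hV(x-x_e)$ from \eqref{e.two_lyap} and \eqref{e.extent}. Your write-up is merely a bit more explicit than the paper in checking that $x(t)=x_e+\Gamma\xi(t)$ is indeed a solution of \eqref{e.ode} lying in $\mathscr C_{x_e}$.
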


\begin{proof} The first two statements follow from Theorems \ref{th.uncertain} and \ref{th.dual_uncertain}. We prove now the third statement. Using \eqref{e.two_lyap}, \eqref{e.extent} we get $\tV(\xi)=\hV(x-x_e)$. Therefore, the required statement follows from the result that $\hV(x-x_e)$ is nonincreasing along the trajectories of \eqref{e.ode}.
\end{proof}

\section{Application to PWLR Lyapunov Functions}
\subsection{Relationship to Previous Results}

In the previous papers \cite{MA_cdc13,PWLRj} the concept of Piecewise Linear in Rate (PWLR) Lyapunov functions has been introduced based on a direct analysis of the CRN. Such functions satisfy the conditions of Definition \ref{def.rlf}, and hence they are Robust Lyapunov functions. In this section we show that those results can be interpreted in the uncertain systems framework introduced above. This also allows to provide alternative algorithms for the existence and construction of PWLR functions.

Consider a CRN \eqref{e.ode} with a $\Gamma \in \mathbb R^{n \times r}$. Two representation of the PWLR Lyapunov function have been discussed. Given a partitioning matrix $H \in \mathbb R^{p \times r}$ such that $\ker H=\ker \Gamma$. PWLR Lyapunov functions are piecewise linear in rates, i.e., they have the form: $V(x)=\tV(R(x))$, where $\tV: \mathbb R^\nu \to \mathbb R$  is a continuous PWL function given as 
\begin{equation*}\label{e.conLF}
 \tilde V(r) = |c_k^T r|, \ r \in \pm\cW_k , k=1,..,m/2,
\end{equation*}
where the regions $\cW_k = \{r \in \mathbb R^\nu: \Sigma_k Hr \ge 0 \},k=1,..,m$ form a proper conic partition of $\mathbb R^\nu$, while $\{\Sigma_{k}\}_{k=1}^m$ are signature matrices with the property $\Sigma_{k}=-\Sigma_{m+1-k}, k=1,..,m/2$.
  The coefficient vectors of each linear component can be collected in a matrix $C=[c_1,..,c_{\frac m2} ]^T \in \mathbb R^{\frac m2 \times r}$.  If the function $\tV$ is convex, then we have the following simplified representation of $V$:
  \[ V(x) = \| C R(x) \|_{\infty}. \]
  This representation reminds of the $\ell_\infty$-norm Lyapunov functions that have been used for linear systems in \cite{polanski95}. In fact, the next theorem establishes the link between the results introduced in \cite{PWLRj}
	\color{black} for checking candidate PWLR functions based on direct analysis and previous work on $\ell_\infty$ Lyapunov functions using the framework introduced in the previous section.

\begin{proposition}\label{th.pwlr_check} Given $\Gamma$ and $H$. Let $V=\tV \circ R$ be a candidate continuous nonnegative PWLR with $C=[c_1 \ ... \ c_{\frac m2}]^T\in \mathbb R^{\frac m2 \times r}$. Then $(\tV,R)$ induces an RLF if and only if:
\begin{enumerate}
  \item $\ker C = \ker \Gamma$, and
  \item there exists matrices $\{\Lambda^{\ell}\}_{\ell=1}^s \subset \mathbb R^{\frac m2 \times {\frac m2}}$ such that
  \begin{equation}\label{e.NewCheck_con}
    \Lambda^{\ell} H = -C \Gamma^{\ell},
  \end{equation}
  and $\lambda_k^{\ell} \Sigma_k >0$, where $\Lambda^{\ell}=[{\lambda_1^{\ell}}^T ... {\lambda_{m/2}^{\ell}}^T]^T$.
   \end{enumerate}

If $\tV$ is convex, then the second condition can be replaced with
\begin{enumerate}
  \item[2)] there exists  Metzler matrices $\{\Lambda^{\ell}\}_{\ell=1}^s \subset \mathbb R^{m \times m}$ such that
    \begin{equation}\label{e.NewCheck}
    \Lambda^{\ell} \tilde C = \tilde C \Gamma^{\ell},
  \end{equation}
  and  $\Lambda^{\ell} \mathbf 1 =0$ for all $\ell=1,..,s$, where $\tilde C=[C^T \ -\! C^T]^T$.
 \end{enumerate}
\end{proposition}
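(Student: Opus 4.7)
By Theorem~\ref{th.uncertain}, $(\tV,R)$ induces an RLF for $\mathscr N_{A,B}$ if and only if $\tV$ is a common Lyapunov function for the linear inclusion $\{\dot r = \Gamma^\ell r\}_{\ell=1}^s$. My plan is to translate the two defining properties of such a common Lyapunov function—matching $\ker\tV$ with $\bigcap_\ell \ker\Gamma^\ell = \ker\Gamma$, and enforcing $\dot{\tV}\le 0$ along each $\Gamma^\ell r$—into algebraic conditions on $C$ by exploiting the polyhedral-conic structure of $\tV$ on the partition $\{\pm\cW_k\}$.

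The kernel condition is immediate since $\tV$ is the absolute value of the linear piece $c_k^T r$ on $\pm\cW_k$ and the $\cW_k$ cover $\mathbb R^\nu$: $\tV(r)=0$ forces $c_k^T r=0$ in whichever cone contains $r$, so $\ker\tV=\ker C$, and together with $\bigcap_\ell \ker\Gamma^\ell = \ker\Gamma$ (the rank-one rows of the $\Gamma^\ell$ collectively span the row space of $\Gamma$) this recovers condition~1. For the decrease condition, I would argue in the interior of each $\cW_k$, where $\tV(r)=c_k^T r$ and the Dini derivative along $\Gamma^\ell r$ equals $c_k^T \Gamma^\ell r$; nonpositivity of this expression on $\cW_k=\{r:\Sigma_k H r \ge 0\}$ is a linear implication, so by Farkas' Lemma it is equivalent to the existence of $\mu_k^\ell\ge 0$ with $-c_k^T \Gamma^\ell = (\mu_k^\ell)^T \Sigma_k H$. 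Setting $\lambda_k^\ell = (\mu_k^\ell)^T \Sigma_k$ and stacking over $k$ produces exactly \eqref{e.NewCheck_con} with $\lambda_k^\ell \Sigma_k\ge 0$; the symmetric region $-\cW_k$ yields the same constraint.

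For the convex case the identity $\tV(r)=\|Cr\|_\infty=\max_i(\tilde C r)_i$ with $\tilde C=[C^T\ -\!C^T]^T$ reduces the pointwise decrease condition to the classical Kiendl--Molchanov--Polanski characterization of $\ell_\infty$ polyhedral Lyapunov functions for linear systems \cite{molchanov86,polanski95}: existence of a Metzler $\Lambda^\ell$ with $\Lambda^\ell\mathbf 1=0$ and $\Lambda^\ell\tilde C=\tilde C\Gamma^\ell$. Sufficiency is a direct computation at an active index $i^*$, where $(\tilde C\Gamma^\ell r)_{i^*}=\sum_j \Lambda^\ell_{i^*j}(\tilde C r)_j\le 0$ because Metzler off-diagonals are nonnegative, all components of $\tilde C r$ are bounded above by $(\tilde C r)_{i^*}$, and the zero-row-sum condition makes the diagonal term absorb the active-level contribution. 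Necessity follows by constructing the dual multipliers row by row, as in the standard proof.

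The main technical obstacle is the careful handling of the upper-right Dini derivative at nondifferentiable boundaries of the partition, and bridging the signed formulation with signature matrices to the Metzler/row-sum formulation via $\tilde C$. The former is standard for continuous PWL functions since the Dini derivative along the flow equals the derivative of whichever linear piece the trajectory enters; the latter requires observing that, in the convex case, the regions $\cW_k$ coincide with cones of constant active index for the max representation, so the signature matrices $\Sigma_k$ encode exactly the sign information captured by a Metzler $\Lambda^\ell$ acting on $\tilde C$, with the row-sum-zero property corresponding to the inherent symmetry $\tilde C = -(\tilde C)_{\text{permuted}}$.
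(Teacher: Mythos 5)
Your overall route---reducing via Theorem~\ref{th.uncertain} to a common Lyapunov function for the vertex systems $\dot r=\Gamma^{\ell}r$, then a per-region Farkas argument for \eqref{e.NewCheck_con} and the Molchanov--Pyatnitskii/Polanski/Kiendl conditions in the convex case---is the natural one in this framework, and it is more explicit than the published proof, which only invokes ``elementary algebraic manipulations'' of Theorems 4--5 of \cite{PWLRj}. However, there is a genuine gap in how you dispatch positive-definiteness. You assert that $\ker\tV=\ker C$ is immediate because $\tV(r)=|c_k^Tr|$ on $\pm\cW_k$. That argument only gives $\ker C\subseteq\ker\tV$: a point $r$ in the interior of $\cW_k$ may satisfy $c_k^Tr=0$ (hence $\tV(r)=0$) while $c_j^Tr\neq 0$ for the inactive indices $j$, so $\ker\tV$ can strictly contain $\ker C$ for a nonconvex continuous candidate. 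For instance, with $\nu=2$, $H=I$, the quadrant partition, $c_1=(1,-1)^T$, $c_2=(1,1)^T$, the function $\tV$ is continuous, nonnegative and vanishes on the ray $r_1=r_2>0$ although $\ker C=\{0\}$. Consequently condition 1 alone does not yield $\ker\tV=\ker\Gamma$ (needed for positive-definiteness of $V$ in the sufficiency direction), and positive-definiteness does not immediately yield $\ker C=\ker\Gamma$ in the necessity direction; this equivalence must be argued from the continuity/partition structure together with the decrease conditions, which is exactly the content encapsulated in Theorems 4--5 of \cite{PWLRj}. In the convex case your identification is of course valid, since there $\tV(r)=\|Cr\|_\infty$.

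Two smaller points to tighten. First, the common Lyapunov function of Definition~\ref{def.commonLyap} is defined on $\bar{\mathbb R}_+^\nu$, so in the necessity direction your Farkas step is a priori over $\cW_k\cap\bar{\mathbb R}_+^\nu$, which introduces additional nonnegative multipliers for the constraints $r\ge 0$; you must argue these can be absorbed (or that decrease on all of $\cW_k$ is in fact available) in order to land exactly on \eqref{e.NewCheck_con}. Second, Farkas delivers $\lambda_k^{\ell}\Sigma_k\ge 0$ rather than the strict sign condition in the statement (compare the nonstrict inequality used in Proposition~\ref{th.pwlr_c}), and your parenthetical claim that the rows $\gamma_i^T$ appearing among the $\Gamma^{\ell}$ span the row space of $\Gamma$ presupposes that every species with a nonzero row of $\Gamma$ is a reactant of some reaction, which should be stated. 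Your treatment of the decrease condition itself is sound; it is the kernel/positive-definiteness step that needs a real argument.
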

\begin{proof} The proof can be carried out by performing elementary algebraic manipulations on the results presented in \cite[Theorems 4,5]{PWLRj}. The details are omitted for the sake of space.
\end{proof}
\begin{remark} The symmetries in equation \eqref{e.NewCheck} imply that it can be written equivalently as:
\begin{equation}\label{e.NewCheck_tilde} C \Gamma^{\ell}= \tilde\Lambda^{\ell} C,\end{equation}
where $\tilde\Lambda^{\ell}$ is an $\tfrac m2 \times \tfrac m2$ matrix which is defined by subtracting the upper $\tfrac m2 \times \tfrac m2$ blocks of $\Lambda^{\ell}$ from each other.  $\tilde\Lambda^{\ell}$ satisfies:
\begin{equation} \label{e.lognorm_expression}\max_{k} \left ( \tilde\lambda_{kk}^{(\ell)} + \sum_{j \ne k} | \tilde\lambda_{kj}^{(\ell)} | \right ) \le 0.\end{equation}
This is exactly the condition that $\ell_\infty$-norm Lyapunov functions need to satisfy
  for a linear system \cite{molchanov86,kiendl92}. This shows that Theorem \ref{th.uncertain} provides the framework to utilize the existing linear stability analysis techniques in the literature to construct robust Lyapunov functions for nonlinear systems such as CRNs. For example, we can verify $\ell_1$ Lyapunov functions of the form $V(x)=\|CR(x)\|_{1}$ directly by replacing condition \eqref{e.lognorm_expression} by
  \begin{equation}  \max_{k} \left ( \tilde\lambda_{kk}^{(\ell)} + \sum_{j \ne k} | \tilde\lambda_{jk}^{(\ell)} | \right ) \le 0,\end{equation}
  instead of converting them to the $\ell_\infty$-norm form.
\end{remark}

Three construction algorithms have presented in \cite{PWLRj} and we revisit the second one here.
Before proceeding to the construction algorithm, we need to introduce the concept of a neighbor to a region. Fix $k\in\{1,..,m/2\}$. Consider $H$: for any pair of linearly dependent rows $h_{i_1}^T,h_{i_2}^T$ eliminate $h_{i_2}^T$. Denote the resulting matrix by $\tilde H \in \mathbb R^{\tilde p \times \nu}$, and let $\tilde\Sigma_1,..,\tilde\Sigma_m$ the corresponding signature matrices. Therefore, the region can be represented as $\cW_k=\{r|\tilde \Sigma_k \tilde Hr \ge 0\}$. The \textit{distance} $d_r$ between two regions $\cW_k,\cW_j$ is defined to be the \textit{Hamming distance} between $\tilde\Sigma_{k},\tilde\Sigma_j$. Hence, the set of neighbors of a region $\cW_k$
are defined as:
\begin{align*}
 \mathcal N_k &= \{ j \in \{1,2,\ldots,m \}: d_r(\cW_j,\cW_k)=1 \}, 
\end{align*}
Equivalently, note that a neighboring region to $\cW_k$ is one which differs only by the switching of one inequality. Denote the index of the switched inequality by the map $s_{k}(.):\mathcal N_k \to \{1,..,p\}$. For simplicity, we use the notation $s_{k\ell}:=s_k(\ell)$.

We use Theorem \ref{th.uncertain} to show that the problem of constructing a PWLR Lyapunov function over a given partition, i.e. a given $H$, can be solved via linear programming. However, instead of encoding the nondecreasingness condition into precomputed sign patterns as in the previous chapter, we use here alternative conditions which are stated in the following proposition.

\begin{proposition}\label{th.pwlr_c}Given the system \eqref{e.ode} and a partitioning matrix $H \in \mathbb R^{p \times r}$. Consider the linear program:
\begin{equation*}
\begin{aligned} 
&  {\text{Find}}
& & c_k, \xi_k, \zeta_k \in \mathbb R^\nu, \Lambda^{\ell} \in \mathbb R^{m \times m}, \eta_{kj} \in \mathbb R, \\ &&& k=1,..,\tfrac m2; j \in \mathcal N_k, \ell=1,..,s,\\
& \text{subject to}
& & c_k^T=\xi_k^T \Sigma_k H ,\\ &&&
  C \Gamma^{\ell} =- \Lambda^{\ell} H,  \lambda_k^\ell \Sigma_k \ge 0, \\ &&&
 c_k-c_j=\eta_{kj}\sigma_{ks_{kj}}h_{s_{kj}}, \\ &&&
\xi_k \ge0,   \mathbf 1^T \xi_k>0,
\Lambda^{\ell} \ge0. 
\end{aligned}
\end{equation*}
Then there exists a PWLR RLF with partitioning matrix $H$ if and only if there exists a feasible solution to the above linear program that satisfies $\ker C=\ker \Gamma$ satisfied. Furthermore, the PWLR RLF can be made convex by adding the constraints $\eta_{kj}\ge 0$.
\strut
\end{proposition}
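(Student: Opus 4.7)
The plan is to recast the abstract characterization in Proposition \ref{th.pwlr_check} as a set of linear feasibility conditions. The constraints $C\Gamma^{\ell} = -\Lambda^{\ell} H$ with $\lambda_k^{\ell}\Sigma_k \ge 0$ and $\Lambda^{\ell}\ge 0$ directly transcribe condition (2) of that proposition: the sign patterns $\lambda_k^{\ell}\Sigma_k \ge 0$ are precisely what that proposition requires of the rows of $\Lambda^{\ell}$, so the nondecrease condition on $\tV\circ R$ is already captured and needs no further argument.

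For the piecewise structure, I would use the representation \eqref{e.conLF}: on region $\cW_k$ one may absorb the sign into $c_k$ so that $c_k^T r \ge 0$ for every $r\in\cW_k=\{r:\Sigma_k Hr\ge 0\}$. By Farkas' lemma, nonnegativity of a linear functional over a polyhedral cone is equivalent to the functional lying in the conic hull of the cone's defining inequalities, which gives $c_k^T = \xi_k^T \Sigma_k H$ with $\xi_k\ge 0$. The strict requirement $\mathbf{1}^T\xi_k > 0$ rules out $c_k=0$, which is necessary for $\tV$ to be nontrivially positive on $\cW_k$. To enforce continuity between neighbors $\cW_k$ and $\cW_j$, note that their shared face lies in the hyperplane $\{r:h_{s_{kj}}^T r=0\}$, so the pieces $c_k^T r$ and $c_j^T r$ agree there iff $c_k-c_j$ is a scalar multiple of $h_{s_{kj}}$; the factor $\sigma_{ks_{kj}}$ merely orients this multiple consistently with the chosen sign of the active inequality. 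Combined with $\ker C=\ker\Gamma$ coming from condition (1) of Proposition \ref{th.pwlr_check}, these equivalences establish the first claim.

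For the convexity addendum, I would invoke the standard criterion that a continuous piecewise-linear function defined on a conic polyhedral partition is convex iff, across every neighbor pair, the gradient jump $c_k-c_j$ points outward from $\cW_k$ toward $\cW_j$ along their common hyperplane. Under the continuity parametrization $c_k-c_j = \eta_{kj}\sigma_{ks_{kj}}h_{s_{kj}}$, this outward condition reduces exactly to $\eta_{kj}\ge 0$. One also needs to check that this scalar convexity criterion on neighbor pairs is sufficient (not only necessary) over the whole fan of regions; this follows by a standard induction on the number of hyperplanes crossed by a line segment, using that any segment in $\mathbb R^\nu$ meets finitely many boundaries.

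The main obstacle I anticipate is the careful bookkeeping of signs: the matrices $\Sigma_k$ fix arbitrary orientations on the partitioning hyperplanes, and one must verify that the Farkas certificate, the continuity patching, and the convexity criterion all use the same orientation convention. Once these sign choices are aligned, the reformulation is essentially mechanical — the analytic content is inherited from Proposition \ref{th.pwlr_check}, and the contribution here is combinatorial, namely gluing the local data $(c_k,\xi_k,\Lambda^{\ell},\eta_{kj})$ into a globally defined continuous function via linear constraints.
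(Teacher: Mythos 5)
Your argument is correct and follows essentially the route the paper intends (the paper itself omits this proof, deferring to the constructions of \cite{PWLRj}): region-wise Farkas certificates for the vertex systems of Theorem \ref{th.uncertain} — i.e.\ condition 2 of Proposition \ref{th.pwlr_check} — for the decrease, Farkas again for the representation $c_k^T=\xi_k^T\Sigma_k H$ with $\xi_k\ge 0$, facet matching across neighbors for continuity, and the local gradient-jump criterion (local-to-global) for the convexity addendum. The only imprecision is your reading of $\mathbf 1^T\xi_k>0$: its role is to guarantee strict positivity of the $k$-th linear piece on the interior of $\cW_k$ (hence nontriviality of $V$ off $\ker H$, complementing the separate check $\ker C=\ker\Gamma$), not merely to rule out $c_k=0$; this does not affect the structure of your equivalence.
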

\begin{remark} A natural choice for $H$ is $H:=\Gamma$. The Lyapunov function reduces then to:
\[ V(x)=\|\diag(\xi_k) \dot x \|_1, \ R(x) \in \mathbb \cW_k,\]
which generalizes the Lyapunov function presented in \cite{maeda78}.
\end{remark}
\color{black}

\begin{remark}The LaSalle's Condition can be verified  via a graphical algorithm described in \S III-F in \cite{PWLRj}.
\end{remark}

\subsection{The Dual PWL Lyapunov Function}
In \S III-C it has been shown that if there exists $\hV$ such that $\tV(r)=\hV(\Gamma r)$, then there exists a dual RLF for the same network family. In the case of PWLR Lyapunov functions condition 1 in Proposition \ref{th.pwlr_check} implies that this condition is always fulfilled.  Hence, consider a PWLR Lyapunov function defined with a partitioning matrix $H$ as in \eqref{e.conLF}. By Proposition \ref{th.pwlr_check} and the assumption that $\ker H=\ker \Gamma$, there exists $G \in \mathbb R^{p \times n}$ and $B \in \mathbb R^{\frac m2 \times n}$ such that $H=G\Gamma$ and $C= B \Gamma$. Similar to $\{\cW\}_{k=1}^m$, we can define the regions:
\[ \mathcal V_k = \{ z |\Sigma_{k} G z \ge 0 \}, k=1,..,m, \]
where it can be seen that $\mathcal V_k$ has nonempty interior iff $\cW_k$ has nonempty interior.\\
Therefore, as the pair $(C,H)$ specify the PWLR function fully, also the pair $(B,G)$ specifies the function:
\[ \hV(z)= b_k^T z, \, \mbox{when} \, \Sigma_{k}G z\ge 0,\]
where $B=[b_1, ..., b_{\frac m2} ]^T$.
If $\tV$ is convex, then it can be written in the form: $V_1(x)=\|C R(x)\|_{\infty}$. Similarly, the convexity of $\hV$ implies that \[V_2(x)=\|B(x-x_e)\|_{\infty},\] where the latter is the Lyapunov function used in \cite{blanchini14}.

Theorem \ref{th.dual_uncertain} established that if $\tV(R(x))$ is an RLF, then $\hV(x-x_e)$ is an RLF also. The following theorem shows that converse holds also for PWLR RLFs, however, it is worth recalling from Remark \ref{dual_lyap_weakness} that $\tV(R(x))$ is nonincreasing for all initials conditions, while $\hV(x-x_e)$ is nonincreasing only on $\mathscr C_{x_e}$.

\begin{theorem}\label{th.pwlr_dual} Given \eqref{e.ode}. Then, if there exists $G \in \mathbb R^{p \times n}$ and $B \in \mathbb R^{\frac m2 \times n}$ such that:
\begin{enumerate}
\item $(B\Gamma,G\Gamma)$ defines a PWLR RLF, then $(B,G)$ defines a dual PWL RLF.
    \item $(B,G)$ defines a dual PWL RLF, then $(B\Gamma,G\Gamma)$ defines a PWLR RLF.
\end{enumerate}
\end{theorem}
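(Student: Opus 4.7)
The strategy is to handle the two implications separately: part 1 will follow as a specialization of the duality Theorem \ref{th.dual_uncertain}, while the converse in part 2 requires a direct argument that exploits the identity $\tV(R(x)) = \hV(\dot x)$ together with the observation that $\dot x$ satisfies the same variational LPV inclusion treated by Theorem \ref{th.uncertain_dual}.

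For part 1, I would first verify that, with $C = B\Gamma$ and $H = G\Gamma$, the PWLR function $\tV$ admits the factorization $\tV(r) = \hV(\Gamma r)$, where $\hV(z) = b_k^T z$ on $\mathcal V_k = \{z : \Sigma_k G z \ge 0\}$. Indeed, on $\cW_k = \{r : \Sigma_k G \Gamma r \ge 0\}$, the image $\Gamma r$ lies in $\mathcal V_k$, so $\hV(\Gamma r) = b_k^T \Gamma r = c_k^T r = \tV(r)$, with the obvious sign accounting on the reflected region given by $\Sigma_k = -\Sigma_{m+1-k}$. Continuity of $\hV$ across boundaries of the $\mathcal V_k$ (at least on $\im \Gamma$) and its nonnegativity are then inherited from the corresponding properties of $\tV$. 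Applying Theorem \ref{th.dual_uncertain} yields that $\hV(x - x_e)$ is an RLF, so $(B,G)$ defines a dual PWL RLF.

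For part 2, the key observation is that $\dot x = \Gamma R(x)$ gives
\begin{equation*}
\tV(R(x)) = \hV(\Gamma R(x)) = \hV(\dot x).
\end{equation*}
Moreover, $\dot x$ evolves according to $\tfrac{d}{dt}\dot x = \Gamma \tfrac{\partial R}{\partial x}(x(t))\, \dot x$, which is precisely the variational LPV inclusion with vertices $\{\Gamma_{i_\ell} e_{j_\ell}^T\}$ appearing in \eqref{e.rho_dual}; and since $\dot x \in \im \Gamma = \ker D^T$, the side-condition $D^T \dot x = 0$ holds automatically for all $t \ge 0$. By hypothesis, $(B,G)$ defines a dual PWL RLF, so through Theorem \ref{th.uncertain_dual} the function $\hV$ is a common Lyapunov function for exactly this inclusion under exactly this constraint. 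Therefore $\hV(\dot x)$ is nonincreasing, and consequently so is $\tV(R(x))$, yielding that $(B\Gamma, G\Gamma)$ defines a PWLR RLF. The kernel condition $\ker C = \ker \Gamma$ from Proposition \ref{th.pwlr_check} can be verified by pulling back the zero set of $\hV\!\mid_{\im \Gamma}$ through $\Gamma$, and the sign-pattern/partitioning conditions follow from the explicit forms $C=B\Gamma$ and $H=G\Gamma$.

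The main technical obstacle I anticipate is the nonsmoothness of the piecewise linear maps: both $\tV$ and $\hV$ are only locally Lipschitz, so the derivative identities above must be interpreted in the Dini sense of \eqref{e.dini}, and the step ``$\hV(\dot x)$ is nonincreasing'' must be justified at boundaries between linear pieces, most likely via the same convex-combination/one-sided-derivative argument used in the proof of Theorem \ref{th.dual_uncertain}. A secondary subtlety is aligning the absolute-value representation of $\tV$ in \eqref{e.conLF} with the signed representation of $\hV$ on $\mathcal V_k$ for $k > m/2$; this is handled by the reflection $\Sigma_k = -\Sigma_{m+1-k}$ together with the identification $b_{m+1-k} = -b_k$, which guarantees a single-valued, continuous $\hV$.
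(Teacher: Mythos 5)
Your part 1 is the same as the paper's: verify the factorization $\tV(r)=\hV(\Gamma r)$ induced by $C=B\Gamma$, $H=G\Gamma$ and invoke Theorem \ref{th.dual_uncertain}. For part 2, however, you take a genuinely different route. The paper argues directly at the piecewise-linear level: since $\dot V_2\le 0$ must hold for \emph{every} $R\in\mathscr K_A$, the monotonicity assumption A3 forces the sign conditions $\sgn(c_{kj})\sgn(x_i-x_{e_i})\le 0$ on each region $\mathcal V_k$; these are encoded via Farkas' lemma as $\sigma_{kj}e_i^T=\lambda_{kji}^T\Sigma_k G+\zeta_{kji}^T D$, and right-multiplying by $\Gamma$ recovers exactly condition C4 of \cite[Theorem 4]{PWLRj}, which is necessary and sufficient for $\dot V_1\le 0$. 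You instead use the identity $V_1(x)=\tV(R(x))=\hV(\dot x)$, observe that $\dot x$ obeys the variational system \eqref{e.2ndderivative} with $\dot x\in\im\Gamma$ automatically, and import the converse (``only if'') direction of Theorem \ref{th.uncertain_dual} to conclude that $\hV(\dot x)$, hence $V_1$, is nonincreasing along all trajectories. This is logically sound within the paper's framework and is arguably cleaner: it bypasses the sign-pattern/Farkas machinery and makes transparent why the PWLR function decreases for \emph{all} initial conditions, not just within one class. What it buys less of: the real work is shifted onto the converse of Theorem \ref{th.uncertain_dual}, which the paper only sketches (``similar to Theorem \ref{th.uncertain}''), so your proof is not self-contained where the paper's is, and it does not produce the explicit algebraic certificate (the link to C4) that the paper's argument yields. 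Two shared loose ends you should tighten rather than wave at: the positive-definiteness/kernel requirement ($\ker(B\Gamma)=\ker\Gamma$, equivalently $\ker\hV\cap\im\Gamma=\{0\}$) needs an actual argument, not just ``pulling back the zero set''; and the LPV/nonsmooth step for $\hV(\dot x)$ must be carried out relative to the subspace $\im\Gamma$ (Rademacher a.e.\ differentiability in $\mathbb R^n$ says nothing on a measure-zero subspace), though for piecewise-linear $\hV$ one-sided directional derivatives settle this, as you correctly anticipate.
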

The proof is presented in the appendix.

\begin{remark}  Since $ D^T(x-x_e)=0$ for $x \in \mathscr C_{x_e}$, then if $\|B(x-x_e)\|_{\infty}$ is an RLF, then $\|(B+ Y D^T)(x-x_e)\|_{\infty}$ is also an RLF for an arbitrary matrix $Y$. Furthermore, since Theorem \ref{th.pwlr_dual} has shown that the reaction-based and the species-based representations are equivalent; it is easier to check and construct RLFs in the reaction-based formulation and they hold the advantage of being decreasing over all stoichiometry classes.
\end{remark}

\section{Properties of Graphically Stable Networks}
\subsection{Robust Non-singularity }
It has been shown in \cite{PWLRj} that the Jacobian of any network admitting a PWLR RLF is $P_0$, which implies that all principal minors are nonnegative. This can be used to show the following result.
\begin{theorem}[Robust Nonsingularity]\label{th.robust_nonsingular} Given \eqref{e.ode}. Assume that it is a graphically stable network. If for some realization $R \in \mathscr K_A$ there exists a point in the interior of a proper stoichiometric class such that the reduced Jacobian is non-singular at it, then the reduced Jacobian is non-singular in the interior of $\mathbb R_+^n$ for any realization of the kinetics. This implies that any positive equilibrium of this network is isolated and non-degenerate relative to its class.
\end{theorem}
\begin{proof}
Recall that for a GSN the negative Jacobian is $P_0$ for any choice of $R \in \mathscr K_A$ \cite{PWLRj}.
Using the Cauchy-Binet formula \cite{banaji07}, let $I \subset \{1,..,n\}$ be an arbitrary subset so that $|I|=k$. The corresponding principal minor can be written as:
\[\det\strut_{I}\left (-\Gamma \frac{\partial R}{\partial x} \right ) = \sum_{J \subset \{1,..,\nu\}, |J|=k}  \det(-\Gamma_{IJ}) \det\left(\frac{\partial R}{\partial x}_{JI}\right ) \mathop{=}^{(*)} \sum_{\iota} a_\iota \prod_{\ell \in L_\iota \subset \{1,..,s\}} \rho_\ell,\]
where $(*)$ refers to the fact that the sum can be expressed as a linear combination of products of $\rho_1,...,\rho_s$. We claim that the coefficients $a_\iota$ are all nonnegative. To show this, assume for the sake of contradiction that there is some $a_{\iota_*}$  negative. If we set all $\rho$'s to zero except the ones appearing in the $\iota_*^{\mathrm{th}}$ term, then this implies that the corresponding principal minor can be negative; a contradiction.

Now, the theorem can be proven by noting that the reduced Jacobian is non-singular iff the sum of all $k \times k$ principal minors of the negative Jacobian is positive, where $k=\mbox{rank}(\Gamma)$. Since it  is assumed that there exists a point for which the reduced Jacobian is non-singular, this implies that the sum of principal minors is positive for some choice of $\rho_1,..,\rho_s$. Since all of the principal minors are nonnegative, then at least one of them is positive. By AK4, that principal minor stays positive for any choice of positive $\rho_1,..,\rho_s$, i.e. it stays positive over the interior of $\mathbb R_+^n$.
\end{proof}

\subsection{Uniqueness of Equilibria}
An important result links injectivity of a map to the notion of $P$-matrix, \cite{nikaido65}. This states that a map is injective if its Jacobian is a $P$ matrix. For an injective vector-field, if an equilibrium exists, then it is unique. This notion has been studied extensively for reaction networks, and specialized by means of appropriate graphical conditions, \cite{banaji07,banaji09}.

Hence, the following theorem follows:
\begin{theorem}[Uniqueness of Positive Equilibria of Graphically Stable Network]\label{cor.p0_app}
 If $\mathscr N_{A,B}$ is GS, then it can not admit multiple nondegenerate positive equilibria in a single stoichiometric compatibility class.
 Furthermore, if there exists an isolated non-degenerate positive equilibrium $x_e$, relative to  $\mathscr C_{x_e}$, then it is unique.
\end{theorem}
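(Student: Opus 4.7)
The plan is to combine two ingredients already assembled in the excerpt: the fact that graphical stability forces the negative Jacobian $-\Gamma\,\partial R/\partial x$ to be a $P_0$ matrix for every admissible kinetics, and the robust nonsingularity statement of Theorem \ref{th.robust_nonsingular}. Together these reduce the result to an invocation of the injectivity-type criterion for $P_0$ Jacobians recalled from \cite[Appendix B]{banaji09}.

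For the first claim I would apply this criterion directly. Since $\mathscr N_{A,B}$ is GS, the $P_0$ property of the (negative) Jacobian holds uniformly over all realizations $R \in \mathscr K_A$ and over all interior points of $\mathbb R_+^n$. The recalled result then asserts that no stoichiometric compatibility class can contain two distinct nondegenerate positive equilibria, which is exactly the statement claimed. No additional work on the kinetic side is required, as the $P_0$ hypothesis of \cite{banaji09} is already in place.

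For the second claim, suppose there is a positive equilibrium $x_e$ which is isolated relative to $\mathscr C_{x_e}$. Theorem \ref{th.robust_nonsingular} then forces the reduced Jacobian to be nonsingular at every interior point of $\mathbb R_+^n$, under any realization $R \in \mathscr K_A$. In particular, any other positive equilibrium which might belong to $\mathscr C_{x_e}$ would itself be nondegenerate; applying the first statement immediately rules out the existence of such a second equilibrium, so $E_{x_e} \cap (\mathbb R_+^n)^{\circ} = \{x_e\}$, proving uniqueness relative to the class.

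The only nontrivial step is a matching of vocabulary: one must verify that the notion of \emph{nondegeneracy} used in \cite{banaji09} (nonsingularity of the Jacobian restricted to the stoichiometric subspace $\im(\Gamma)$) coincides with the ``reduced Jacobian nonsingular'' conclusion supplied by Theorem \ref{th.robust_nonsingular}. Once this identification is in place, the argument is essentially a concatenation of the two preceding results; no genuine new analysis is needed, and the proof can be made quite short.
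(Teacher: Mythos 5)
Your handling of the first claim coincides with the paper's: graphical stability gives the $P_0$ property of the negative Jacobian for every realization, and the criterion recalled from \cite[Appendix B]{banaji09} then excludes multiple nondegenerate positive equilibria in a class. The gap is in the second claim. Your reduction to the first claim only rules out a second \emph{positive} equilibrium: Theorem \ref{th.robust_nonsingular} guarantees nonsingularity of the reduced Jacobian only on the interior of $\mathbb R_+^n$, so a hypothetical second equilibrium $y \in \mathscr C_{x_e}$ lying on the boundary of the orthant is covered neither by your nondegeneracy argument nor by the cited result of \cite{banaji09}, which concerns positive equilibria only. But the conclusion ``unique relative to $\mathscr C_{x_e}$'' is intended, and proved in the paper, in the strong sense that $x_e$ is the only equilibrium in the whole class, boundary points included; indeed this strong form is exactly what is invoked later (in the proof of Theorem \ref{th.criticalSiphon}, item 2, a boundary equilibrium obtained from Brouwer's theorem must contradict it). So your argument, while correct as far as it goes, establishes a strictly weaker statement than the one claimed and needed.

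The missing case is closed in the paper by a mean-value argument which bypasses the first claim entirely: for any $y \in \mathscr C_{x_e}$ with $\Gamma R(y)=0$, the fundamental theorem of calculus and the integral mean value theorem give $0=\Gamma R(x_e)-\Gamma R(y)=\Gamma \frac{\partial R}{\partial x}(x^*)\,(x_e-y)$ with $x^*=t^* x_e+(1-t^*)y$ for some $t^* \in (0,1)$. Since $x_e \gg 0$, the point $x^*$ lies in the open orthant even when $y$ is on the boundary, so Theorem \ref{th.robust_nonsingular} applies at $x^*$; as $x_e-y \in \im\Gamma$, nonsingularity of the reduced Jacobian on $\im\Gamma$ forces $y=x_e$. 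Adding this step (or some other argument that handles boundary equilibria) is necessary for your proof to cover the full statement; the vocabulary check you flag (nondegeneracy in \cite{banaji09} versus reduced-Jacobian nonsingularity) is fine but does not address this issue.
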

\begin{proof}
It has been established in \cite[Appendix B]{banaji09} that the network can not admit multiple nondegenerate positive equilibria in a single stoichiometric compatibility class if the Jacobian is $P_0$. Hence, the first statement follows.

For the second statement, Theorem \ref{th.robust_nonsingular} has shown that the the existence of an non-degenerate positive equilibrium $x_e$ ensures that the reduced Jacobian  is non-singular on the interior of the orthant. In order to show uniqueness, assume for the sake of contradiction that there exists $y \ne x_e, y \in \mathscr C_{x_e}$ such that $\Gamma R(y) =0$. Then the fundamental theorem of calculus implies,
\[ 0=\Gamma R(x_e) - \Gamma R(y) \mathop{=}  \Gamma \int_0^1 \frac{\partial R}{\partial x}(tx_{e}+(1-t)y) \, (x_e- y) dt \mathop{=} \Gamma  \frac{\partial R}{\partial x}(x^*) (x_e-y),   \]
where $x^*= t^* x_e + (1-t^*) y$, and $t^* \in (0,1)$. The existence of $t^*$ is implied by the integral mean-value theorem. Since $x^* \in \mathscr C_{x_e}^\circ$, then the reduced Jacobian at $x^*$  is non-singular relative to $\im \Gamma$. Since $x_e-y \in \im \Gamma$, then $y=x_e$: a contradiction.
\end{proof}
\begin{remark} Since the Jacobian is $P_0$, then if arbitrary inflows and outflow were added to every species of a GSN the resulting Jacobian would be a $P$ matrix \cite{berman94}; this is also known as the continuous-flow stirred tank reactor (CFSTR) version of the network. The CFSTR network is injective. This shows that our framework has a direct relationship to the recent results on injectivity \cite{craciun05}, $P$ matrices for reaction networks \cite{banaji07,banaji09} and concordance \cite{feinberg12}.
\end{remark}

\subsection{Persistence}
An important dynamical property in the context of positive systems is \emph{persistence}. Informally it holds if any solution initialized in the interior of the positive orthant will not asymptotically approach its boundary. In \cite{Angeli07} is shown that if a conservative network does not have critical siphons, then it is persistent.  Note that this is a graphical property which is independent of the specific realization of the kinetics involved.


In this section it is shown that persistence can be established for conservative GSNs under suitable conditions detailed in the following Theorem.

\begin{theorem}[Absence of Types of Critical Siphons]\label{th.criticalSiphon} Given \eqref{e.ode}. Consider the network family $\mathscr N_{A,B}$. Assume there exists a critical siphon $P$, and let $\Lambda(P)$ be the set of output reactions of $P$. Then, $\mathscr N_{A,B} $ is a not GS, i.e., the network does not admit a PWLR Lyapunov function if any of the following conditions is satisfied.
\begin{enumerate}
  \item $P$ is a critical deadlock.
   \item  the network is conservative and for some realization of the network family there exists a point  in the interior of a proper stoichiometric compatibility class on which the reduced Jacobian is nonsingular,
  \item  the network is conservative and $\ker \Gamma$ is one-dimensional.
\end{enumerate}
\end{theorem}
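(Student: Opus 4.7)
The plan is to argue by contradiction in each of the three cases: assume $\mathscr N_{A,B}$ admits a PWLR Lyapunov function $V(x)=\tV(R(x))$ as in Definition \ref{def.rlf}, and derive an inconsistency from the presence of the critical siphon $P$.

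For case 1, the key structural observation is that $P$ being a deadlock means $\Lambda(P)=V_R$, so by assumption A2 every reaction rate vanishes on the face $F_P := \{x\in\bar{\mathbb R}_+^n : x_i=0 \text{ for all } i\in P\}$. Hence every point of $F_P$ is an equilibrium of \eqref{e.ode} and $V\equiv 0$ on $F_P$. Criticality of $P$ (no conservation law is supported on $P$) translates algebraically into $\mathbf 1_P\notin\ker\Gamma^T$, which in turn ensures that $F_P$ meets the relative interior of at least one stoichiometric compatibility class $\mathscr C_{x_\circ}$ for some $x_\circ$ in the positive orthant. I would then select a realization $R\in\mathscr K_A$ (e.g.\ with mass-action-like decay of rates as reactants vanish) and construct an initial condition for which the trajectory approaches $F_P\cap\mathscr C_{x_\circ}$ while $V$ is monotonically driven to zero along the trajectory. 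Combined with the PWLR structure and Theorem \ref{th.robust_nonsingular}/\ref{cor.p0_app}, this produces an equilibrium set in the relative interior of $\mathscr C_{x_\circ}$ that is not compatible with the uniqueness/nondegeneracy forced by GS, giving the required contradiction.

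For case 2, conservativity makes trajectories bounded. The existence of an isolated positive equilibrium $x_e$ for some realization, together with Theorems \ref{th.robust_nonsingular} and \ref{cor.p0_app}, makes $x_e$ the unique positive equilibrium in $\mathscr C_{x_e}$ for every realization in $\mathscr K_A$; combined with Theorem \ref{th.lyap} and the LaSalle-type argument available for PWLR RLFs (cf.\ the remark after Proposition \ref{th.pwlr_c}), this would yield global asymptotic stability of $x_e$ relative to $\mathscr C_{x_e}$. But a critical siphon, via the Angeli–De Leenheer–Sontag persistence theory, forces some interior trajectory to have $\omega$-limit points in $F_P\subset\partial\bar{\mathbb R}_+^n$, contradicting GAS to the interior point $x_e$. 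Case 3 I would reduce to case 2: conservativity plus $\dim\ker\Gamma=1$ implies that every stoichiometric compatibility class is a bounded line segment and that (by AG) the unique direction $v\gg 0\in\ker\Gamma$ parameterises reaction-rate equilibria, which makes the existence of a positive equilibrium inside each proper class automatic; once such an equilibrium exists, it is isolated relative to its class (one-dimensionality of $\ker\Gamma$ prevents a continuum), and case 2 applies.

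The main obstacle will be case 1. Cases 2 and 3 reduce, in effect, to invoking persistence/boundary-attraction results together with the injectivity/$P_0$-Jacobian consequences of the PWLR framework already developed in the paper, so they are structural reductions. Case 1, however, requires actually exhibiting a contradiction directly from the PWLR inequalities and a well-chosen realization; the delicate point is to turn the combinatorial fact $\mathbf 1_P\notin\ker\Gamma^T$ into a concrete statement about the coefficient vectors $\{c_k\}$ of $\tV$ and the signature matrices $\Sigma_k$ of Proposition \ref{th.pwlr_check} that is violated whenever a deadlock face $F_P$ meets the relative interior of a compatibility class. This is where the argument will need the most care.
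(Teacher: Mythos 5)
There are genuine gaps in all three cases, and the route you propose diverges from (and is weaker than) the one the paper actually uses. For case 2, the decisive step in your argument is that ``a critical siphon \ldots forces some interior trajectory to have $\omega$-limit points in $F_P$.'' This is not a theorem: the Angeli--De Leenheer--Sontag result is one-directional (absence of critical siphons plus conservativity implies persistence); the \emph{presence} of a critical siphon does not imply failure of persistence, so no contradiction with attractivity of $x_e$ can be extracted this way. Moreover, the global asymptotic stability you want to contradict is not available from GS alone: Theorem \ref{th.lyap}(3) needs the LaSalle condition, which is an extra hypothesis, not part of Definition \ref{def.rlf}. The paper's argument for case 2 is static, not dynamic: the siphon face $\Psi_P$ is forward invariant, conservativity makes $\Psi_P\cap\mathscr C$ compact and convex, so Brouwer's fixed point theorem (Lemma \ref{lem.brouwer}) yields a \emph{boundary} equilibrium in the same class as the isolated positive equilibrium, directly contradicting the uniqueness statement of Theorem \ref{cor.p0_app}. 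For case 3, your reduction to case 2 rests on the claim that a positive isolated equilibrium exists ``automatically'' in every proper class when $\ker\Gamma$ is one-dimensional; this is unsubstantiated (equilibria could a priori all sit on the boundary, and one-dimensionality of $\ker\Gamma$ in reaction space does not by itself rule out continua of equilibria in species space). The paper instead again applies Brouwer on $\Psi_P\cap\mathscr C$ to get $x^*$ with $R(x^*)=tv$, $t\ge0$, and dichotomizes: if $t=0$ the zero set of $x^*$ is a siphon containing $P$ and one is reduced to item 1; if $t>0$ then $R(x^*)\gg0$ is impossible because the output reactions of $P$ vanish on the face, forcing $P=\emptyset$. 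So case 3 reduces to case 1, not to case 2.

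For case 1 you explicitly leave the key step open, and the tools you gesture at cannot close it: Theorems \ref{th.robust_nonsingular} and \ref{cor.p0_app} both presuppose an isolated \emph{positive} equilibrium for some realization, which item 1 does not assume (nor does it assume conservativity), and the equilibria filling the deadlock face $F_P$ are not positive, so the $P_0$/uniqueness machinery gives nothing. Your algebraic rendering of criticality, $\mathbf 1_P\notin\ker\Gamma^T$, is also too weak (criticality excludes \emph{every} nonnegative vector supported on $P$ from $\ker\Gamma^T$), and the geometric claim that $F_P$ meets the relative interior of a compatibility class is dubious: a point with $x_i=0$, $i\in P$, lies in the relative interior of a class only if those coordinates are identically zero on the class. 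The paper's proof of item 1 is instead a purely sign-pattern argument: by Farkas' lemma, criticality yields a nonempty-interior region $\{r:\Sigma_k\Gamma r\ge0\}$ whose signature is $+1$ on all coordinates indexed by $P$ (Lemma \ref{lem.deadlock_region}); since $\Lambda(P)=V_R$, the corresponding coefficient vector of the candidate PWLR function must then be componentwise nonpositive, which violates the necessary condition from the earlier PWLR results (Theorem 9 of \cite{PWLRj}) that the function increase along the positive kernel vector $v\gg0$ guaranteed by AG. No realization or trajectory construction is needed, and attempting one, as you propose, would still leave you without a contradiction mechanism in the absence of a positive equilibrium.
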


The following theorem follows immediately from \cite{Angeli07}[Theorem 2] and Theorem \ref{th.criticalSiphon}.

\begin{theorem}[Persistence of A Class of GSNs]\label{th.special_persistence}
Given \eqref{e.ode}. Assume the network is conservative. Then  the network family $\mathscr N_{A,B} $ is persistent if
 \begin{enumerate}
   \item  $\ker \Gamma$ is one-dimensional,  {or}

    \item by removing the reverse of some reactions the reduced stoichiometry matrix has a one-dimensional kernel and the associated network is GS,   
        {or}
   \item there exists an non-degenerate positive equilibrium in the interior of some proper stoichiometric class for a realization of $\mathscr N_{A,B}$.
 \end{enumerate}

\end{theorem}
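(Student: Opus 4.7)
The plan is to reduce persistence to the absence of critical siphons. By the main result of \cite{angeli07}, a conservative CRN without critical siphons is persistent, so, since conservativeness is assumed throughout, under each of the three alternatives it suffices to rule out critical siphons. The section's title makes the standing graphical stability assumption for $\mathscr N_{A,B}$ explicit, and this GS hypothesis is the lever I would use against Theorem \ref{th.criticalSiphon}.

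For item 1, I would take the contrapositive of condition 3 of Theorem \ref{th.criticalSiphon}: if a critical siphon existed in a conservative network with one-dimensional stoichiometric kernel, the network would fail to be GS, contradicting the assumption. Item 3 is handled identically using condition 2 of Theorem \ref{th.criticalSiphon}: the combination of a critical siphon with an isolated positive equilibrium in a proper stoichiometric class would again force non-GS. In both cases $\mathscr N_{A,B}$ has no critical siphons, and \cite{angeli07} yields persistence.

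For item 2, let $\Gamma'$ denote the stoichiometry matrix obtained after removing the designated reverse columns from $\Gamma$. The reduced network inherits conservativeness (any conservation law $w^T\Gamma=0$ automatically satisfies $w^T\Gamma'=0$, since $\Gamma'$ has a subset of the columns of $\Gamma$), is GS by hypothesis, and has one-dimensional kernel. The contrapositive of Theorem \ref{th.criticalSiphon}(3) therefore shows that the reduced network has no critical siphons. The bridge back to the original network relies on the combinatorial observation, noted by the authors immediately before the theorem statement, that re-inserting the reverse of a reaction cannot produce a critical siphon. A final appeal to \cite{angeli07} then yields persistence of $\mathscr N_{A,B}$.

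The step I anticipate as the main obstacle is precisely this last bridge. One needs to verify, by a short case analysis, that a siphon $P$ in the augmented network is automatically a siphon in the reduced network --- the key point being that the reverse of a reaction $\mathrm R_j$ contributes an input edge at $P$ only when $\mathrm R_j$ was already an output edge of $P$, so removing reverse reactions weakens rather than strengthens the siphon-defining constraints. Coupled with the fact that the left kernel of $\Gamma$ is unchanged by adding reverse columns (so criticality transfers between networks), this supplies the missing link and reduces all three cases to the unified statement ``conservative, no critical siphons, hence persistent.''
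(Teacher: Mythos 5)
Your proposal is correct and follows essentially the same route as the paper: the paper's (implicit) proof is exactly the combination of Theorem \ref{th.criticalSiphon} (items 2 and 3, via contraposition under the standing GS hypothesis) with the persistence criterion of \cite{angeli07}, together with the observation that adjoining the reverse of a reaction cannot create a critical siphon. Your extra details for item 2 --- that a siphon of the augmented network restricts to a siphon of the reduced one and that criticality transfers because the left kernel of $\Gamma$ is unchanged by adding reverse columns --- correctly fill in the bridging step the paper only states in passing.
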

The second item in Theorem \ref{th.special_persistence} follows by the fact that the inclusion of a reverse of a reaction does not create a critical siphon.

\subsection{Exponential Stability}
We have shown that the existence of a PWLR Lyapunov function implies that it is a common Lyapunov function for all linear systems that belong to a linear differential inclusion. 

In fact, one of the properties of systems that admits piecewise linear Lyapunov function is that a stable equilibrium can not have purely imaginary eigenvalues \cite{castelan91}. Hence, the reduced Jacobian at a non-degenerate equilibrium can not admit pure imaginary eigenvalues which implies the following Theorem:
\begin{theorem}[Exponential Stability]\label{th.exp_stability} Given \eqref{e.ode} that admits a PWLR function. If a positive equilibrium $x_e$ is non-degenerate relative to $\mathscr C_{x_e}$, then it is exponentially asymptotically stable.
\end{theorem}

\begin{remark} For a conservative network the state space is compact. Therefore, the existence of a non-degenerate equilibrium implies that it is unique (Theorem \ref{cor.p0_app}), it is locally exponantially stable (Theorem \ref{th.exp_stability}), the level sets of the Lyapunov function are always invariant (Theorem \ref{th.lyap_rlf}) and the network is persistent (Theorem \ref{th.special_persistence}). However, global asymptotic stability (GAS) can not be claimed directly without a LaSalle argument. It is potentially possible that the equilibrium is not GAS; for instance, there can be a  limit cycle living in the boundary of the basin of attraction that attracts the outside trajectories. Despite the fact that this seems unlikely, it can not be precluded completely without a proof. Therefore, the graphical algorithm for verifying the LaSalle's argument presented in \cite{PWLRj} is still needed to claim GAS.
\end{remark}

\section{Relationship to Contraction Analysis}
Contraction analysis is an approach to stability investigation focused on the relative behaviour of solutions, rather than on their deviations from a nominal trajectory (such as an equilibrium point). This area of research is as old as the concept of contraction mappping, however, it has sparked growing interest in the control systems community in relationship to the analysis of dynamical systems \cite{lohmiller98}, \cite{sontag14}.

The are several formulations of contraction theory. We are going to present the formulation that utilizes matrix measures, or logarithmic norms.

 \begin{definition}[Logarithmic Norms] For a given induced matrix norm $\|.\|_*$ on $\mathbb R^{n\times n}$, the associated matrix measure (or logarithmic norm) can be defined as follows for a matrix $A \in \mathbb R^{n \times n}$:
\begin{equation}\label{e.lognorm}\mu_*(A):=\limsup_{h \to 0^+} \frac{ \|I+hA\|_*-1}h.\end{equation}
Note that the same definition applies if $\|.\|_*$ is a semi-norm.
\end{definition}
\begin{remark}The logarithmic norm can be evaluated for the standard norms. For instance, the following expression can be used for the $\ell_\infty$ norm:
\begin{equation}\label{e.lognorm_inf}\mu_\infty(A) = \max_{i} \left (a_{ii} + \sum_{j \ne i} |a_{ij}|\right ).\end{equation}
Note that this expression is identical to the one appearing in \eqref{e.lognorm_expression}. \end{remark}

For a dynamical system, negativity of the logarithmic norm can be linked to contraction. This result has been stated in different forms, refer to the tutorial \cite{sontag14} for more details. We state the result as follows.
\begin{theorem}[\cite{sontag14}] \label{th.contraction}Consider a dynamical system $\dot x = f(x)$ defined on a convex subset $X$ of $\mathbb R^n$. Let $| \cdot |_*$ be a norm in $\mathbb R^n$ and $\|.\|_*$ the induced matrix norm on $\mathbb R^{n \times n}$. Assume that
\[ \forall x \in X, \quad \mu_*\left ( \frac{\partial f}{\partial x} (x) \right )  \le c. \]
Then for any two solutions $\varphi(t;x), \varphi(t;y)$ of the dynamical system, the following condition holds:
\begin{equation}
| \varphi(t;x) - \varphi(t;y) |_* \le e^{c t} | \varphi(0;x) - \varphi(0;y) |_*.
\end{equation}
\end{theorem}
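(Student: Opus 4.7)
The plan is to reduce the bound on the separation of trajectories to a differential inequality for a variational equation, integrated against a convex combination of initial conditions. Concretely, for fixed $x,y$ in the convex domain, I would parametrize $\gamma(s) = sx + (1-s)y$ for $s \in [0,1]$ and set $\phi(t,s) := \varphi(t;\gamma(s))$. By smooth dependence on initial conditions, the sensitivity $w(t,s) := \partial \phi(t,s)/\partial s$ exists and satisfies the variational equation
\begin{equation*}
\dot w(t,s) \;=\; \frac{\partial f}{\partial x}\bigl(\phi(t,s)\bigr)\, w(t,s), \qquad w(0,s) = x - y.
\end{equation*}
Integrating in $s$ and using the fundamental theorem of calculus gives $\varphi(t;x) - \varphi(t;y) = \int_0^1 w(t,s)\,ds$, so by the triangle inequality it suffices to show $\|w(t,s)\|_* \le e^{ct}\|x-y\|_*$ uniformly in $s$.

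The key lemma is that for any absolutely continuous $w(t)$ satisfying $\dot w = A(t) w$, the upper right Dini derivative obeys
\begin{equation*}
D^+ \|w(t)\|_* \;\le\; \mu_*\bigl(A(t)\bigr)\,\|w(t)\|_*.
\end{equation*}
This is immediate from the definition \eqref{e.lognorm}: writing $w(t+h) = (I + hA(t))w(t) + o(h)$, the triangle inequality yields
\begin{equation*}
\frac{\|w(t+h)\|_* - \|w(t)\|_*}{h} \;\le\; \frac{\|I + hA(t)\|_* - 1}{h}\,\|w(t)\|_* + \frac{o(h)}{h},
\end{equation*}
and passing to $\limsup_{h\to 0^+}$ gives the bound. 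Applying this with $A(t) = (\partial f/\partial x)(\phi(t,s))$ and using the hypothesis $\mu_*(\partial f/\partial x) \le c$ everywhere, we get $D^+\|w(t,s)\|_* \le c\,\|w(t,s)\|_*$.

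A standard Gronwall-type comparison argument (applicable to Dini derivatives of continuous functions, see e.g.\ Yoshizawa) then yields $\|w(t,s)\|_* \le e^{ct}\|w(0,s)\|_* = e^{ct}\|x-y\|_*$. Integrating in $s$ and applying the triangle inequality to $\varphi(t;x)-\varphi(t;y) = \int_0^1 w(t,s)\,ds$ concludes the proof.

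The main obstacle is the non-smoothness of a general norm $\|\cdot\|_*$, which forces the use of Dini derivatives rather than classical ones in the differentiation step; this is exactly why the definition \eqref{e.lognorm} is itself framed as a $\limsup$. A secondary but minor technicality is the need to invoke $\mathscr C^1$ dependence of $\varphi$ on its initial condition (so that $w(t,s)$ makes sense and satisfies the variational equation), which follows from $f \in \mathscr C^1$, an assumption implicit in the setup.
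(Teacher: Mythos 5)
Your argument is correct, but note that the paper itself gives no proof of this statement: it is quoted verbatim from the tutorial \cite{sontag14}, so there is no in-paper proof to compare against. What you wrote is essentially the standard proof found in that reference --- homotopy $\gamma(s)=sx+(1-s)y$ of initial conditions (where convexity of the domain is used), the variational equation for $w=\partial\varphi/\partial s$, Coppel's inequality $D^+\|w\|_*\le \mu_*(A(t))\|w\|_*$ derived directly from the $\limsup$ definition \eqref{e.lognorm}, a Dini--Gronwall comparison, and integration in $s$ --- and each step is sound given $f\in\mathscr C^1$; the only tacit assumption worth flagging is that the intermediate solutions $\varphi(t;\gamma(s))$ exist on the whole time interval considered, which the theorem statement itself also leaves implicit.
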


Note that if $c<0$  the solutions of the system are exponentially contracting. If $c=0$, then the system is non-expansive. The choice of the norm plays a crucial role even with respect to diagonal weighings. The result above have been applied to CRNs before by weighting the $\ell_1$-norm with a diagonal matrix \cite{DiBernardo10}.  The link between the logarithmic norms and norm-based Lyapunov functions has been established before \cite{kiendl92}. Since convex PWLR Lyapunov functions are $(\infty)$-norms weighted by a  {non-square} matrix, a similar result can be expected to hold. We state in the following Theorem  the precise relationship between convex PWLR functions introduced before and contraction analysis.
\color{black}

\begin{theorem}[Relationship to Contraction Analysis] \strut
\begin{enumerate}
  \item Given the extent of reaction representation of CRNs $\dot \xi = R(x_e + \Gamma \xi)$. Assume that there exists a convex PWLR function $\tV(\xi)=\|C\xi \|_\infty$, and let $\mu_C$ be the logarithmic norm associated. Let the associated Jacobian be: $J_1(\xi):=\frac{\partial R}{\partial x} \Gamma  $. Then,
\[ \forall \xi, \mu_C ( J_1(\xi) ) \le 0. \]
Hence, the system is non-expansive on the subspace $(\ker\Gamma)^{\bot}$, where $\mathbb R^\nu=\ker\Gamma \oplus (\ker\Gamma)^{\bot}$.  
  \item   Given the ODE $\dot x = \Gamma R(x)$. Assume that there exists convex PWL function 
       $V_2(x)=\|B(x-x_e)\|_\infty$, \color{black} and let $\mu_B$ be the logarithmic norm associated. Let the associated Jacobian be: $J_2(x):=\Gamma\frac{\partial R}{\partial x}$. Then,
\[ \forall x \in \mathscr C_{x_e},  \, \mu_B ( J_2(x) ) \le 0. \]
Hence, the system is non-expansive in each stoichiometric class $\mathscr C_{x_e}$.
\end{enumerate}
\end{theorem}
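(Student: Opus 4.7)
The plan is to convert the characterization of convex PWLR Lyapunov functions from Proposition~\ref{th.pwlr_check} directly into a bound on the matrix measure of the parameter-varying Jacobian. The central observation is that the intertwining identity $C\Gamma^{\ell}=\tilde\Lambda^{\ell}C$ in \eqref{e.NewCheck_tilde} together with inequality \eqref{e.lognorm_expression} is exactly the statement $\mu_\infty(\tilde\Lambda^{\ell})\le 0$; thus intertwining serves as the bridge between PWLR certificates and matrix-measure estimates.

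For part~(1), I would first apply the chain rule to write $J_1(\xi)=\tfrac{\partial R}{\partial x}(x_e+\Gamma\xi)\,\Gamma$, and then invoke the conic decomposition~\eqref{e.rho_decomp} to express $J_1(\xi)=\sum_{\ell=1}^{s}\rho_{\ell}(\xi)\Gamma^{\ell}$ with $\rho_{\ell}(\xi)\ge 0$. Next, left-multiplying by $C$ and applying the intertwining on each generator gives $CJ_1(\xi)=\tilde\Lambda(\xi)\,C$ with $\tilde\Lambda(\xi):=\sum_{\ell}\rho_{\ell}(\xi)\tilde\Lambda^{\ell}$. Subadditivity and positive homogeneity of $\mu_\infty$ then yield $\mu_\infty(\tilde\Lambda(\xi))\le\sum_{\ell}\rho_{\ell}(\xi)\,\mu_\infty(\tilde\Lambda^{\ell})\le 0$. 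Finally, the intertwining propagates this bound to $\mu_C$: the estimate $\|C(I+hJ_1)\zeta\|_\infty=\|(I+h\tilde\Lambda(\xi))C\zeta\|_\infty\le\|I+h\tilde\Lambda(\xi)\|_\infty\,\|C\zeta\|_\infty$, combined with the asymptotic $\|I+h\tilde\Lambda\|_\infty=1+h\mu_\infty(\tilde\Lambda)+o(h)$, gives $\mu_C(J_1(\xi))\le\mu_\infty(\tilde\Lambda(\xi))\le 0$ after passing to the limit in \eqref{e.lognorm}. Non-expansiveness orthogonal to $\ker\Gamma$ then follows from Theorem~\ref{th.contraction} together with $\ker C=\ker\Gamma$.

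For part~(2), the plan is to reduce to part~(1) through Theorem~\ref{th.pwlr_dual}: setting $C:=B\Gamma$ produces a reaction-coordinate convex PWLR RLF, so the result of part~(1) applies to this $C$. For $x\in\mathscr C_{x_e}$, writing $z=x-x_e=\Gamma\xi$ and computing
\begin{equation*}
BJ_2(x)z=B\Gamma\,\tfrac{\partial R}{\partial x}\,\Gamma\xi=CJ_1(\xi)\xi=\tilde\Lambda(\xi)\,C\xi=\tilde\Lambda(\xi)\,Bz,
\end{equation*}
one sees that the very same $\tilde\Lambda(\xi)$ intertwines $J_2$ with $B$ on each stoichiometric class. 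Running the same limit argument as in part~(1) then gives $\mu_B(J_2(x))\le 0$ restricted to $\im\Gamma$, which is precisely non-expansiveness within $\mathscr C_{x_e}$.

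The main obstacle I anticipate is conceptual rather than computational: $\|C\,\cdot\,\|_\infty$ and $\|B\,\cdot\,\|_\infty$ are genuine semi-norms rather than norms, so the matrix measure $\mu_C$ (respectively $\mu_B$) in \eqref{e.lognorm} must be interpreted on the quotient by the kernel of the semi-norm. This is precisely why the conclusion of part~(1) applies only modulo $\ker\Gamma$ and that of part~(2) only on each stoichiometric compatibility class, and it will require a brief justification that Theorem~\ref{th.contraction} remains valid in this weaker setting. A secondary technical point is checking that the intertwining passes cleanly through the $\limsup$, which follows from continuity of the standard $\ell_\infty$ operator norm together with uniform boundedness of $\tilde\Lambda(\xi)$ on any compact region of state space.
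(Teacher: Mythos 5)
Your proposal is correct and follows essentially the same route as the paper: the conic decomposition \eqref{e.rho_decomp}, the intertwining relation \eqref{e.NewCheck_tilde} with $\mu_\infty(\tilde\Lambda^{\ell})\le 0$ from \eqref{e.lognorm_expression}, subadditivity of the matrix measure, and the passage to the limit in \eqref{e.lognorm}, with the semi-norm issue handled by restricting to directions orthogonal to $\ker\Gamma$ (the paper makes this explicit via a coordinate transformation). Your reduction of part~(2) to part~(1) through $C=B\Gamma$ and $z=\Gamma\xi$ is just a repackaging of the paper's relation $B\Gamma_i e_j^T=\tilde\Lambda^{\ell}B+Y^{\ell}D^T$ restricted to $\mathscr C_{x_e}$, so no genuinely different ideas are involved.
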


\begin{remark}\label{r.lasalle_contraction} The upper bound $\mu_\infty \left ( \sum_{\ell=1}^s \rho_\ell  \tilde\Lambda^{\ell} \right )$ can be identical to zero especially if $\tfrac m2 \ge n$. Therefore, an analogous concept to a LaSalle argument need to be introduced. This is discussed in the next section.
  \end{remark}

  \subsection{Variational Dynamics and LaSalle Argument for Contraction Analysis}
   In a recent work, Forni and Sepulchre \cite{sepulchre14} have proposed a Lyapunov framework for contraction analysis using a so-called \emph{Finsler structure}. In order to minimize the background needed, we apply it directly to our context. The Finsler-Lyapunov function for the system \eqref{e.extent_ode_e} is $V_F:  \text{\textbf{\textit{T}}} \bar{\mathbf R}_+^\nu \to \bar{\mathbf R}_+$. If contraction analysis was carried out with respect to the norm: $\|.\|_*: \xi \mapsto \|C \xi\|_\infty$, then the corresponding Finsler-Lyapunov function would be
    \[ V_F(\delta \xi) = \| C \delta \xi \|_\infty . \]
    The Finsler structure is given by the mapping $\delta \xi \mapsto \| C \delta \xi \|_\infty$. Therefore, $V_F$ can be considered as a Lyapunov function for the variational system:
    \begin{equation}\label{e.variational_system} \dot{\delta \xi} = \frac{\partial R}{\partial x} \Gamma \delta \, \xi.\end{equation}
The Finsler structure induces a distance function, which is, in this case, $d_F(x,y)=\|C(x-y)\|_\infty$. Hence, if $V_F$ is strictly decreasing then this would imply that this system is incrementally asymptotically stable with respect to the distance function \cite{sepulchre14}, which is equivalent to the result given by Theorem \ref{th.contraction}.
However, if strict decreasingness does not hold, the Finsler-Lyapunov framework for contraction analysis has the advantage of accommodating a LaSalle's invariance principle that can be used to show strict contraction. In fact, the same algorithm proposed in \cite{PWLRj} can be used for the Finsler-Lyapunov function to test the LaSalle's Condition. The following theorem states the result.
\begin{theorem}[Strict Contraction]\label{th.lasalle_contraction} Given \eqref{e.ode}. Assume that $V(x)=\|C R(x)\|_\infty$ is a PWLR Lyapunov function that satisfies Proposition 7 in \cite{PWLRj} . Then the trajectories of \eqref{e.extent_ode_e} are exponentially contractive with respect to the norm $\|.\|_*: \xi \mapsto \|C \xi\|_\infty$ in directions orthogonal to $\ker \Gamma$.
\end{theorem}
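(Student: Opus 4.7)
The plan is to follow the Finsler--Lyapunov program of \cite{sepulchre14}, treating $V_F(\delta\xi)=\|C\delta\xi\|_\infty$ as a Lyapunov function for the variational system $\dot{\delta\xi}=(\partial R/\partial x)(x_e+\Gamma\xi(t))\,\Gamma\,\delta\xi$. The preceding theorem already yields $\dot V_F\le 0$ along every such variational flow, which encodes non-expansiveness; what remains is to leverage the LaSalle-type condition C5$^{'}$i and the positive homogeneity of $V_F$ to upgrade this into a uniform exponential rate in the quotient by $\ker\Gamma$.

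First I would transfer condition C5$^{'}$i, originally formulated for the primal PWLR analysis in \cite[\S III-F]{PWLRj}, to the variational system. The graphical algorithm there only uses the combinatorial data $\{C,H,\Sigma_k,\Gamma^\ell\}$, all of which appear identically here because the variational vector field is the conic combination $\sum_\ell \rho_\ell(t)\Gamma^\ell\delta\xi$ with exactly the same vertices $\Gamma^\ell=e_{j_\ell}\gamma_{i_\ell}^T$. Consequently the algorithm produces the same invariant set and gives a LaSalle statement of the form: any variational trajectory $\delta\xi(t)$ with $\dot V_F(\delta\xi(t))\equiv 0$ satisfies $\Gamma\,\delta\xi(t)\equiv 0$, i.e.\ $\delta\xi(t)\in\ker\Gamma$ for all $t\ge 0$.

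Next, I would exploit positive homogeneity to convert the eventual decay supplied by LaSalle into a uniform multiplicative decrement over a fixed time interval. Because the variational dynamics is linear in $\delta\xi$ and $V_F$ is degree-one positively homogeneous, the ratio
\[
\phi_T(\delta\xi_0)=\frac{V_F(\delta\xi(T;\delta\xi_0))}{V_F(\delta\xi_0)}
\]
is invariant under positive scaling of $\delta\xi_0$, so it descends to a continuous function on the compact quotient $\{V_F=1\}/\ker\Gamma$. Non-expansiveness gives $\phi_T\le 1$, while the variational LaSalle step forbids $\phi_T=1$ outside the equivalence class of $\ker\Gamma$. Compactness then yields a uniform bound $\phi_T\le c<1$, and iterating over $[kT,(k{+}1)T]$ produces the exponential estimate $V_F(\delta\xi(t))\le e^{-\lambda t}V_F(\delta\xi(0))$ with $\lambda=-T^{-1}\log c$, which is precisely strict contraction of the $\xi$-dynamics in directions transverse to $\ker\Gamma$.

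The hard step will be this compactness/uniformity argument, specifically ruling out that $\phi_T$ creeps up to $1$ along a sequence of initial directions accumulating onto $\ker\Gamma$. One needs continuous dependence of $\delta\xi(t;\delta\xi_0)$ on $\delta\xi_0$ (immediate from linearity), together with the observation that along any convergent subsequence the limiting variational trajectory would satisfy $V_F(\delta\xi(T))=V_F(\delta\xi(0))$, hence $\dot V_F\equiv 0$ on $[0,T]$, and so by the LaSalle step its initial condition lies in $\ker\Gamma$ and is excluded from the quotient. Boundedness of $\xi(t)$, available under the conservativity hypothesis invoked throughout the paper, enters here to keep the time-varying coefficient matrix $(\partial R/\partial x)(x_e+\Gamma\xi(t))\Gamma$ in a compact set, so that the continuous dependence is uniform on $[0,T]$ and the limit-closure argument goes through.
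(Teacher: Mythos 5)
Your first half coincides with the paper's own proof: the paper likewise passes to the variational system $\dot{\delta\xi}=\frac{\partial R}{\partial x}\Gamma\,\delta\xi$, takes $V_F(\delta\xi)=\|C\,\delta\xi\|_\infty$, and shows---by replicating the arguments of Proposition 7 of \cite{PWLRj} under C5$^{'}$i---that a variational trajectory along which $\dot V_F\equiv 0$ must satisfy $\Gamma\delta\xi\equiv 0$. Where you diverge is the conclusion: the paper stops at that invariance property and invokes \cite[Theorem 2]{sepulchre14} to convert it into contraction transverse to $\ker\Gamma$, whereas you attempt to re-derive the exponential rate yourself through homogeneity and a compactness/uniformity argument.

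That re-derivation has a genuine gap. The variational flow is time-varying, so $\phi_T$ is not a function of the initial direction alone: it depends on the base segment $\xi(\cdot)$ over the window considered. Compactness of $\{V_F=1\}/\ker\Gamma$ therefore yields a bound $c<1$ only for one fixed window of one fixed base trajectory; to iterate you need $c$ uniform over all windows and all base trajectories, which requires precompactness of the family of base segments (Arzel\`a--Ascoli, using boundedness of $\xi(t)$---itself a hypothesis not present in the theorem statement) and, crucially, that the limiting segment is again one to which the C5$^{'}$i argument applies. This is exactly where the argument can break: the limiting base segment may approach the boundary of $\Rnn$, where A4 no longer guarantees strict positivity of the $\rho_\ell$; since the zero matrix belongs to the conic hull of $\{\Gamma^1,\dots,\Gamma^s\}$ (see the remark following Theorem \ref{th.uncertain}), $V_F$ can then be constant along the degenerate limit without $\Gamma\delta\xi\equiv 0$, which is precisely the mechanism by which $\phi_T$ could creep up to $1$ along directions you have not excluded. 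In addition, your limit only produces $\dot V_F\equiv 0$ on the finite window $[0,T]$, while the invariance argument imported from \cite{PWLRj} is formulated for $t\ge 0$, so you must verify that the graphical argument closes on a finite interval. To repair the proof, either add the hypotheses that make the limit-closure step legitimate (trajectories bounded and bounded away from the boundary, plus the finite-window invariance step), or do as the paper does and appeal to \cite[Theorem 2]{sepulchre14} once the invariance property has been established.
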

\begin{proof} As per \cite[Theorem 2]{sepulchre14}, we need to show that if a trajectory lives in $\ker \dot V_F$, then it is an equilibrium for the variational system \eqref{e.variational_system}. Since $V_F$ is a piecewise function it can be studied per partition regions as before. Hence let $V_F(\delta \xi)=c_k^T \delta \xi$ when $\delta \xi \in \cW_k$. Then,
\[ \dot V_F (\delta \xi ) = c_k^T \dot{\delta \xi} = c_k^T \frac{\partial R}{\partial x} \Gamma{\delta \xi}. \]
Note that the expression above is analogous to (25) in \cite{PWLRj}, where $\sgn(\Gamma \delta \xi)$ can be made constant in each partition region. Therefore, the arguments of the proof of \cite[Proposition 7]{PWLRj} can be replicated to show that the algorithm proposed in \cite{PWLRj} implies that when $\delta \xi (t) \in \ker \dot V_F$ for all $t \ge 0$, then $\Gamma \delta \xi(t) \equiv 0$.
\end{proof}
\begin{remark}Parallel to the duality expounded in the previous sections, the results can also be interpreted for the variational system in species coordinates:
\begin{equation}\label{e.2ndderivative}
\dot{\delta x} = \Gamma  \frac{\partial R(x)}{\partial x} \delta x, D \delta x(0)=0.
\end{equation}
Then, $\hV(\delta x)$ is a Lyapunov function for \eqref{e.2ndderivative}.
\end{remark}
\section{Conclusions}
We have presented a theoretical complement for \cite{PWLRj}. It has been shown that PWLR Lyapunov functions introduced in \cite{PWLRj} can be interpreted as defining a common Lyapunov function for a linear differential inclusion in the reaction coordinates. Furthermore, a dual Lyapunov function in the species coordinates has been defined.
Many additional properties of graphically stable networks have been shown. Examples of biochemical networks for which our framework is applicable can be found in \cite{MA_ecc16}.
\section*{Appendix: Proofs}
Before proving the results of the paper, we need to state and
prove the following Lemma:

 \begin{lemma}\label{lem.lip} Let $\dot x := f(x)$, and let $V: \bar{\mathbb R}_+^n  \to   \bar{\mathbb R}_+$ be a locally Lipschitz function such that:
\[ \frac{\partial  V(x)}{\partial x} f(x) \le 0, \ \mbox{whenever} \ \frac{\partial V(x)}{\partial x}  \ \mbox{exists}, \]
then $\dot{V}(x) \le 0$ for all $x$.
\end{lemma}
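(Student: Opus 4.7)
The plan is to reduce the Dini-derivative inequality to a statement about the classical derivative of $V$ at nearby differentiability points. Since $V$ is locally Lipschitz, Rademacher's theorem provides a full-measure subset $\mathcal D \subset \bar{\mathbb R}_+^n$ on which $V$ is classically differentiable, and on any compact neighborhood $V$ has a Lipschitz constant $L$, so $\|\nabla V(y)\| \le L$ whenever $y \in \mathcal D$ lies in that neighborhood. Fix a point $x$ and set $v := f(x)$; the goal is to estimate the upper right Dini derivative in the direction $v$.

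The first step is a Fubini argument: the complement of $\mathcal D$ has Lebesgue measure zero, so for almost every translate $y$ of $x$ in a small disk transverse to $v$, the one-dimensional slice $\{y + s v : s \in [0, h]\}$ meets $\mathcal D$ in a full-measure subset of $[0, h]$. For such $y$ the Lipschitz map $s \mapsto V(y + s v)$ is absolutely continuous, yielding the fundamental-theorem-of-calculus identity
\[ V(y + h v) - V(y) \;=\; \int_0^h \nabla V(y + s v) \cdot v \, ds. \]
The second step handles the mismatch between the fixed direction $v$ and the true vector field along the integration path. I would split
\[ \nabla V(y+s v) \cdot v \;=\; \nabla V(y+sv)\cdot f(y+sv) \;+\; \nabla V(y+sv)\cdot \bigl(v - f(y+sv)\bigr), \]
where the first summand is $\le 0$ by the hypothesis, and the second is bounded in absolute value by $L\,\|v - f(y+sv)\|$. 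Continuity of $f$ (guaranteed by assumption \textbf{A1} in the CRN setting, since $f(x) = \Gamma R(x)$ with $R \in \mathscr C^1$) makes this error term $o(1)$ uniformly as $y \to x$ and $h \to 0^+$; hence $V(y+hv) - V(y) \le L h\,\omega(y,h)$ with $\omega(y,h) \to 0$.

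Finally, using continuity of $V$ to let $y \to x$ through admissible translates yields $V(x + h v) - V(x) \le L h\,\tilde\omega(h)$ with $\tilde\omega(h) \to 0^+$, and dividing by $h$ before taking $\limsup_{h \to 0^+}$ produces $\dot V(x) \le 0$. The main technical hurdle is the Fubini step: ensuring that one can find arbitrarily close parallel segments along which the integral representation is valid, since Rademacher alone only gives differentiability on a full-measure set of $\mathbb R^n$, not along any particular line. Once that is in place, the remaining estimates are routine continuity arguments that exploit the bound on $\|\nabla V\|$ furnished by local Lipschitz continuity.
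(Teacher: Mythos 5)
Your proposal is correct, but it takes a genuinely different route from the paper's. The paper works through nonsmooth analysis: by Rademacher's theorem the gradient exists almost everywhere, every element $p$ of the Clarke generalized gradient $\partial_C V(x)$ is a convex combination of limits of gradients $\nabla V(x_i)$ at differentiability points $x_i \to x$, the hypothesis gives $\nabla V(x_i)^T f(x_i) \le 0$ and hence $p^T f(x) \le 0$ in the limit, and the conclusion follows because the Dini derivative is dominated by Clarke's directional derivative $\max\{p^T f(x) : p \in \partial_C V(x)\}$. You instead reprove the needed inequality by hand: Fubini yields a full-measure family of lines in the direction $v = f(x)$ along which $V$ is differentiable almost everywhere, Lipschitz continuity makes $s \mapsto V(y+sv)$ absolutely continuous so the fundamental theorem of calculus applies on those segments, the hypothesis kills the leading term of the integrand, and continuity of $f$ together with the bound $\|\nabla V\| \le L$ controls the mismatch term $v - f(y+sv)$; finally continuity of $V$ transfers the estimate from nearby good translates $y$ to the point $x$ itself before taking $\limsup_{h\to 0^+}$. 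The Fubini step you single out is indeed the crux and is handled correctly; it is the standard device for integrating almost-everywhere gradient information along segments. What each approach buys: the paper's argument is shorter once Clarke's calculus from \cite{clarke97} is taken off the shelf, while yours is elementary and self-contained, needing only Rademacher, Fubini and absolute continuity. Two caveats are shared by both arguments and deserve a remark in either write-up: continuity of $f$ is not stated in the lemma but is needed (the paper uses it tacitly when passing from $\nabla V(x_i)^T f(x_i) \le 0$ to $p^T f(x) \le 0$; you invoke A1 explicitly), and since the domain is $\bar{\mathbb R}_+^n$ one should check that the translated segments $y+sv$ (respectively the sequences $x_i$) can be kept inside the orthant near boundary points --- harmless in the CRN setting because $x_i = 0$ implies $f_i(x) \ge 0$, but worth stating.
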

\begin{proof}Since $V$ is assumed to be locally Lipschitz, Rademacher's Theorem implies that it is differentiable (i.e., gradient exists) almost everywhere \cite{clarke97}. Recall that for a locally Lipschitz function the \emph{Clarke's gradient} at $x$ can be written as  $\partial_C V(x) :=\co \partial V(x)$, where:
 \begin{align*}\partial V(x):=  \left\{ p \in \mathbb R^n : \exists x_i \to x \, \mbox{with} \, {\partial V(x_i)}/{\partial x} \ \mbox{exists, such that}, p=\lim_{i \to \infty} {\partial V(x_i)}/{\partial x} \right \}. \end{align*}
Let $p \in \partial V(x)$ and let $\{x_i\}_{i=1}^\infty$  be the corresponding sequence. By the assumption stated in the lemma, $({\partial V(x_i)}/{\partial x}) f(x_i) \le 0$, for all $i$. Hence, the definition of $p$ implies that $p^Tf(x) \le 0$. Since $p$ was arbitrary, the inequality holds for all $p \in \partial V(x)$. \\
Now, let $p \in \bar \partial V(x)$ where $ p=\sum_{i} \lambda_i p_i$ is a convex combination of any $p_1,...,p_{n+1} \in \partial V(x)$. By the inequality above, $p^T f(x) = \sum_{i} \lambda_i (p_i^T f(x) ) \le 0. $ Hence, $p^T f(x) \le 0$ for all $p \in \bar \partial V(x)$. \\
As in \cite{clarke97}, the Clarke's derivative of $V$ at $x$ in the direction of $f(x)$ can be written as $D_{f(x)}^C V(x) = \max \{ p^T f(x) : p \in \bar \partial V(x) \} $. By the above inequality, we get $D_{f(x)}^C V(x)  \le 0 $ for all $x$. Since the Dini's derivative is upper bounded by the Clarke's derivative, we finally get:
\begin{align*}\dot V(x)&:= \limsup_{h \to 0^+} \frac{ V(x+h f(x)) - V(x)} h  \le \underset{y \to x}{\limsup_{h \to 0^+}} \frac{ V(y+h f(x)) - V(y)} h =: D_{f(x)}^C V(x) \le 0, \end{align*}
for all $x$.
\end{proof}

\paragraph*{Proof of Theorem \ref{th.uncertain}} We show the existence of the common Lyapunov function implies the existence of the RLF. Nonnegativity of $V$ follows from the nonnegativity of $\tV$. Let $(i,j)=\kappa(\ell)$, recall that $\Gamma^\ell=e_j \gamma_i^T$, hence $\ker \tV=\bigcap_{\ell=1}^s \ker \Gamma^{\ell} = \ker \Gamma$. Therefore, $ R(x) \in \ker V$ iff $\Gamma R(x)=0$, which establishes the positive-definiteness of $V$.

We assumed that $\tV$ has a negative semi-definite time-derivative for every linear system in the considered set. Hence, when $\tV$ is differentiable, we can write  $({\partial \tV}/{\partial r}) \Gamma^\ell r \le 0$, $\ell=1,...,s$. Hence, for any $\rho^1,...,\rho^s \in \bar{\mathbb R}_+$:
\begin{equation}\label{e.commonlyap_dec} \sum_{\ell=1}^s \rho^\ell \frac{\partial \tV}{\partial r} \Gamma^\ell r \le 0, \mbox{when} \, (\partial V(r) / \partial r) \ \mbox{exists}. \end{equation}
Therefore, when $\tV$ is differentiable:
\begin{align}
  \dot V(x) &  = \frac{\partial \tV}{\partial R}
   \frac{\partial R}{\partial x}(x) \Gamma R( x  )  \label{e.Vodt_expansion}
       = \frac{\partial \tV}{\partial R}
   \left ( \sum_{i,j: \alpha_{ij}>0}  \frac{\partial R_j}{\partial x_i}(x) E_{ji} \right )\Gamma R( x  ),
\end{align}
where $\partial \tV/\partial R:= \left . (\partial \tV/\partial r) \right |_{r=R(x)}$. \\ Now, denote $\rho^\ell= \frac{\partial R_j}{\partial x_i}(x) $, which is nonnegative by A3. This allows us to write:
\begin{align} \label{e.uncertain_proof0}
  \dot V(x) &  =  \sum_{\ell=1}^s \rho^\ell \frac{\partial \tV}{\partial R} E_{ji} \Gamma R(x)\\ \label{e.uncertain_proof}
   & =\sum_{\ell=1}^s \rho^\ell \frac{\partial \tV}{\partial R} \Gamma^\ell R(x) \le 0,\, \mbox{for almost all} \, x.
\end{align}
The last inequality follows from \eqref{e.commonlyap_dec}. Using Lemma \ref{lem.lip}, $\dot V(x)\le 0$ for all $x$, and for all $R \in \mathscr K_A$.

 In order to show the other direction, almost all properties outlined in Definition \ref{def.commonLyap} are clearly satisfied, we just show nonincreasingness. Assume that there exists $\ell$ such that $\tV(r)$ is not nonincreasing along the trajectories of $\dot r=\Gamma^\ell r$. Consider the corresponding term in \eqref{e.uncertain_proof0}. Since $V(R(x))$ is a Lyapunov function for any choice of admissible rate reaction function $R$, choose $\rho^\ell=\frac{\partial R_j}{\partial x_i}$ to be large enough such that $\dot V(x) \ge 0$ for some $x$; a contradiction.
 \hfill $\blacksquare$

\paragraph*{Proof of Theorem \ref{th.pwlr_dual}} The first statement follows from Theorem \ref{th.dual_uncertain}. In order to show the second statement, let $V_2(x)= b_k^T (x-x_e),$ for $x-x_e \in \mathcal V_k$. We will show that $V_1(x)= c_k^T R(x),$ for $R(x) \in \cW_k$ is nondecreasing. Without loss of generality, the partition matrix can be written in the form: $G=[ I \  \hat G^T]^T$. This representation implies that the sign of $x-x_e$ is determined in every region $\mathcal V_k$. Now, assume that $x-x_e \in \mathcal V_k^\circ$, then:
\[ \!\!\dot V_2(x)=\! b_k^T \Gamma R(x) \!= \! c_k^T R(x) \! \le \! 0 = c_k^T R(x_e),\, \mbox{for all} \, R\in \mathscr K_A. \]
Let $R_j(x) \in \supp R(x)$, and let $\alpha_{ij}>0$. Since $R$ is nondecreasing by A3, if $\sgn(x_i-x_{e_i})\sgn(c_{kj})>0$, there exists $R \in \mathscr K_A$ such that $\dot V_2(x) \ge 0$. Hence, this implies that the inequality $\sgn(c_{kj})\sgn(x_i-x_{e_i})\le 0$ holds. Fix $j$, if there exists $i_1,i_2$ such that $\alpha_{i_1 j}, \alpha_{i_2j}>0$ and $\sgn(x_{i_1}-x_{e_{i_1}})\sgn(x_{i_2}-x_{e_{i_2}})<0$, then $\sigma_{kj}:=0$. Otherwise, $\sigma_{kj}:=\sgn(x_i - x_{e_i})$ for some $i$ such that $\alpha_{ij}>0$. \\
Hence, in order to have $\dot V_2(x) \le 0$ for all $R \in \mathscr K_A$ we need that $ \sigma_{kj} (x_i - x_{e_i}) \ge 0 $ whenever $x-x_e \in \mathcal V_k$, for all $k,j,i$ with $\alpha_{ij}>0$. By Farkas Lemma \cite{rockafellar}, this is equivelant to the existence of $\lambda_{kji} \in \bar{\mathbb R}_+^{p}, \zeta_{kji} \in \mathbb R^\iota$:
\begin{equation}\label{e.dual_farkas} \sigma_{kj} e_i^T = \lambda_{kji} ^ T \Sigma_k G + \zeta_{kji}^T D,    \end{equation}
where $D^T \in \mathbb R^{\iota \times n}$ is a matrix whose columns are basis vectors for $\ker \Gamma^T$. \\
If we multiply both sides of \eqref{e.dual_farkas} by $\Gamma$ from the left, then we get condition C4 in \cite[Theorem 4]{PWLRj} which necessary and sufficient for $\dot V_1(x)= \frac{d}{dt}(c_k^T R(x)) \le 0$. \hfill $\blacksquare$

 \paragraph*{Proof of Theorem \ref{th.criticalSiphon}}
 Assume $P$ is a critical siphon for the petri-net associated with $\Gamma$, and let $n_p=|P|$. Let $\Lambda(P)$ be the set of output reactions of $P$, and let $\nu_p=|\Lambda(P)|$.

Before we prove item 1 of Theorem \ref{th.criticalSiphon}, the following lemma is needed.

\begin{lemma}\label{lem.deadlock_region} Consider a network family $\mathscr N_{A,B}$. Let $P$ be a set of species that does not contain the support of a conservation law; let its indices be numbered as $\{1,...,n_p\}$. Then, there exists a nonempty-interior region $\{r | \Sigma_k \Gamma r \ge 0\} $ with a signature matrix $\Sigma_k$ that satisfies $\sigma_{k1}=...=\sigma_{kn_p}=1$.
\end{lemma}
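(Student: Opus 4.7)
}
The plan is to convert the non-existence of a conservation law supported in $P$ into a strict feasibility statement via a theorem of alternatives, and then upgrade the resulting partial sign pattern into a full one by a generic perturbation. The signature matrix $\Sigma_k$ will be read off directly from a suitably chosen rate vector.

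Let $\Gamma_P \in \mathbb R^{n_p \times \nu}$ denote the submatrix of $\Gamma$ whose rows are indexed by $P$. The first step is to produce some $r^\star \in \mathbb R^\nu$ satisfying $\Gamma_P r^\star \gg 0$. By Gordan's theorem of the alternative, such an $r^\star$ exists if and only if there is \emph{no} nonzero $\lambda \in \bar{\mathbb R}_+^{n_p}$ with $\lambda^T \Gamma_P = 0$. Any such $\lambda$, padded with zeros on the indices outside $P$, would yield a nonzero nonnegative $w \in \bar{\mathbb R}_+^n$ satisfying $w^T \Gamma = 0$ and $\supp(w) \subseteq P$, i.e., a conservation law whose support is contained in $P$. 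Since by hypothesis $P$ does not contain the support of any conservation law, this alternative is ruled out and $r^\star$ exists.

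Next, I would perturb $r^\star$ to fix the signs of the remaining components $(\Gamma r)_i$ for $i \notin P$. For a generic direction $u \in \mathbb R^\nu$ and all sufficiently small $\varepsilon > 0$, the vector $r^\star + \varepsilon u$ still satisfies $(\Gamma(r^\star + \varepsilon u))_i > 0$ for $i \in P$ (these components vary continuously in $\varepsilon$ and are strictly positive at $\varepsilon = 0$), while simultaneously avoiding each of the finitely many hyperplanes $\{r : \gamma_i^T r = 0\}$ for $i \notin P$. Setting $\sigma_{ki} = 1$ for $i \in P$ and $\sigma_{ki} = \sgn((\Gamma(r^\star + \varepsilon u))_i) \in \{\pm 1\}$ for $i \notin P$ yields a signature matrix $\Sigma_k$ for which $\Sigma_k \Gamma(r^\star + \varepsilon u) \gg 0$. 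Hence $r^\star + \varepsilon u$ lies in the interior of $\{r : \Sigma_k \Gamma r \ge 0\}$, which is therefore nonempty, as required.

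The main delicate step is identifying the dual alternative in Gordan's theorem precisely with the conservation-law condition used to define criticality; once this link is drawn, the perturbation argument that promotes a strict partial sign pattern to a full one is routine.
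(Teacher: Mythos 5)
Your proposal is correct and rests on the same core as the paper's proof: both reduce the hypothesis that $P$ supports no conservation law to a Farkas/Gordan alternative for the rows of $\Gamma$ indexed by $P$, yielding a point $r$ with $\gamma_i^T r>0$ for all $i\in P$. The only difference is cosmetic—the paper argues by contradiction and fixes the signs of the remaining coordinates by partitioning $\mathbb R^\nu$ into the closed sign-orthants $\bigcap_{i\notin P}\{r:\sigma_i\gamma_i^T r\ge 0\}$, whereas you use a generic small perturbation to avoid the hyperplanes $\{r:\gamma_i^T r=0\}$, $i\notin P$.
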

\begin{proof} Assume the contrary. This implies that $ \cap_{i=1}^{n_p} \{ R | \gamma_{i } ^ T R > 0 \} \, \bigcap \, \cap_{i=n_p+1}^{n} \{ R | \sigma_{i} \gamma_{i } ^ T R > 0 \} = \emptyset$ for all possible choices of signs $\sigma_i=\pm 1$. However, $\mathbb R^r$ can be partitioned into a union of all possible half-spaces of the form $\cap_{i=n_p+1}^{n} \{ R | \sigma_{i} \gamma_{i } ^ T R \ge 0 \}$. Therefore, this implies that $ \cap_{i=1}^{n_p} \{ R | \gamma_{i } ^ T R > 0 \} = \emptyset$. By Farkas Lemma, this implies that there exists $\lambda \in \mathbb R^t$ satisfying $\lambda > 0$ such that $ [\lambda^T 0] \Gamma =0$. Therefore, $P$ contains the support of the conservation law $[\lambda^T \,  0]^T$; a contradiction.\end{proof}

Therefore, we can state the proof of the first item:
\begin{proof}[{Proof of Theorem \ref{th.criticalSiphon}-1)}]
Without loss of generality, let $\{1,...,n_p\}$ be the indices of the species in $P$. Using Lemma \ref{lem.deadlock_region}, there exists a nonempty-interior sign region $\mathcal S_k, 1 \le k \le m_s$ with a signature matrix $\Sigma_k$ that satisfies $\sigma_{k1}=...=\sigma_{kn_p}=1$.
  Since $\Lambda(P)=\mathscr R$, this implies $b_{kj}\le 0$ for all $j=1,..,\nu$. However, this is not allowable by \cite[Theorem 9]{PWLRj} since $ \zeta_k^T B_k v \le 0 $ for all $v \in \ker \Gamma \cap \Rnn$ and for any choice of admissible $\zeta_k$.
\end{proof}

In order to proceed, an existence result of equilibria is needed:
\begin{lemma}\label{lem.brouwer}
 Consider a network family $\mathscr N_{A,B}$. Let $P$ be a critical siphon, and let $\Psi_P$ be the associated face. If the network is conservative, then for any proper stoichiometric compatibility $\mathscr C$, there exists an equilibrium $x_e$ of \eqref{e.ode} such that $x_e \in \Psi_P \cap \mathscr C$.
\end{lemma}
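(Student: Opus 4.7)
The plan is a Brouwer fixed-point argument applied to the flow restricted to $\Psi_P \cap \mathscr{C}$. The three ingredients I would establish are: (i) forward invariance of $\Psi_P \cap \mathscr{C}$ under \eqref{e.ode}, (ii) its convexity, compactness, and nonemptiness, and (iii) extraction of an equilibrium as a subsequential limit of fixed points of time-$T$ maps as $T \downarrow 0$.

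The first ingredient is where the siphon property enters. If $x \in \Psi_P$, so $x_i = 0$ for every $i \in P$, then by A2 every reaction $R_j$ having some reactant in $P$ vanishes at $x$; these are exactly the reactions in $\Lambda(P)$. For each $i \in P$, splitting $\dot x_i$ into production and consumption terms,
\[
\dot x_i = \sum_{j:\beta_{ij}>0}\beta_{ij}R_j(x) \; - \; \sum_{j:\alpha_{ij}>0}\alpha_{ij} R_j(x),
\]
the consumption sum vanishes because those $j$ lie in $\Lambda(P)$, while the production sum vanishes because, by the defining property of a siphon, any reaction producing a species in $P$ must also consume one in $P$ and hence lies in $\Lambda(P)$. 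Thus $\dot x_i = 0$ throughout $\Psi_P$ for every $i \in P$, so $\Psi_P$ is forward invariant. Combining with the automatic forward invariance of $\mathscr{C}$ and its compactness from conservativity, the set $\Psi_P \cap \mathscr{C}$ is closed, bounded, convex, and forward invariant.

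Nonemptiness is where the critical property of $P$ is essential, and I expect it to be the main obstacle. The plan is a Farkas-type contraposition: were $\Psi_P \cap \mathscr{C}$ empty, LP duality applied to the feasibility system $\{x \ge 0,\ x_P = 0,\ L^T x = L^T x_\circ\}$ (with the columns of $L$ spanning $\ker \Gamma^T$) would produce $w \in \ker \Gamma^T$ with $w_i \le 0$ for $i \notin P$ and $x_\circ^T w > 0$. Combined with a strictly positive conservation law $w_0 \gg 0$ granted by conservativity, a cone-manipulation argument exploiting the polyhedral structure of $\ker \Gamma^T \cap \bar{\mathbb{R}}_+^n$ should produce a nonzero nonnegative conservation law whose support lies in $P$, contradicting criticality.

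Granted these three facts, for each $T > 0$ the time-$T$ flow map $\Phi_T(x) := \varphi(T;x)$ is a continuous self-map of the nonempty convex compact set $\Psi_P \cap \mathscr{C}$, and Brouwer yields a fixed point $x_T$, i.e., a $T$-periodic trajectory. Selecting a sequence $T_n \downarrow 0$ and passing to a convergent subsequence $x_{T_n} \to x_*$ (by compactness), the periodicity identity
\[
0 \; = \; \varphi(T_n; x_{T_n}) - x_{T_n} \; = \; \Gamma \int_0^{T_n} R(\varphi(s; x_{T_n}))\,ds,
\]
divided by $T_n$ and combined with the uniform convergence $\varphi(s; x_{T_n}) \to x_*$ on $s\in[0,T_n]$ and the continuity of $R$, forces $\Gamma R(x_*) = 0$. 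Closedness of $\Psi_P \cap \mathscr{C}$ places $x_*$ in the desired set, yielding the equilibrium claimed by the lemma.
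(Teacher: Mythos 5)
Your main argument coincides with the paper's own proof: the paper likewise observes that $\Psi_P\cap\mathscr C$ is compact, convex and forward invariant and then applies Brouwer's fixed point theorem to the associated flow. Your verification of forward invariance from the siphon property together with A2, and your extraction of an equilibrium from fixed points of the time-$T$ maps as $T_n\downarrow 0$, are exactly the (correct) details that the paper leaves implicit.

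The one place where you go beyond the paper is also where your proposal breaks: nonemptiness of $\Psi_P\cap\mathscr C$. Your Farkas contraposition produces, in case of emptiness, a vector $w\in\ker\Gamma^T$ with $w_i\le 0$ for $i\notin P$ and $w^Tx_\circ>0$; but criticality only excludes \emph{nonnegative} elements of $\ker\Gamma^T$ whose support lies in $P$, and no cone manipulation with a positive conservation law $w_0\gg 0$ turns the mixed-sign certificate $w$ into such a vector (adding $t\,w_0$ can make the off-$P$ entries nonnegative, but generically it cannot annihilate them all simultaneously). In fact no argument can close this gap, because such a certificate can coexist with conservativity, criticality and the siphon property. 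Take $\mathrm{R}_1: X_1+X_2\to 2X_1+X_3$ and its reverse $\mathrm{R}_2: 2X_1+X_3\to X_1+X_2$. Then $P=\{X_1\}$ is a siphon, it is critical since $e_1\notin\ker\Gamma^T$, the network is conservative with $w_0=(1,2,1)^T$, and assumption AG holds; yet $x_1-x_3$ is conserved, so on the class through $x_\circ=(1,1,0)^T$ (which contains strictly positive points, hence is proper under any usual reading) one has $x_1=1+x_3\ge 1$, so $\Psi_P\cap\mathscr C_{x_\circ}=\emptyset$; here $w=(1,0,-1)^T$ is precisely your Farkas certificate. So nonemptiness is not a consequence of the stated hypotheses. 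The paper's own proof silently presupposes it (Brouwer is applied to $\Psi_P\cap\mathscr C$ as if it were nonempty, presumably as part of what a ``proper'' class is taken to mean), so your instinct to flag the issue is sound; but the correct fix is to assume $\Psi_P\cap\mathscr C\neq\emptyset$ (or adopt a definition of ``proper'' that guarantees it), not to attempt to derive it from criticality.
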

\begin{proof} The set $\Psi_P \cap \mathscr C$ is compact, forward invariant, and convex, since the both sets $\Psi_P, \mathscr C$ are as such. Hence, the statement of the lemma follows directly from the application of the Brouwer's fixed point theorem on the associated flow.
\end{proof}

We are ready now to prove the second item of Theorem \ref{th.criticalSiphon}.
\begin{proof}[ {Proof of Theorem \ref{th.criticalSiphon}-2)}]By Lemma \ref{lem.brouwer}, there exists an equilibrium in $\Psi_P$. Since it is assumed that there exists an isolated equilibrium in interior, Theorem \ref{cor.p0_app} implies that $\mathscr N_{A,B}$ is not GS.
\end{proof}

Before concluding the proof, a simple lemma is stated and proved:
\begin{lemma}\label{lem.zero_siphon}
Let $x_e$ be an equilibrium of \eqref{e.ode}. Let $\tilde P$ be a set of species that correspond to $\{1,..,n\} \backslash \supp(x_e)$. Then, $\tilde P$ is a siphon.
\end{lemma}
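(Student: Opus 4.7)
The plan is to verify the siphon property by contradiction, leveraging the equilibrium condition together with the monotonicity assumptions A1--A4 on the reaction rate functions. Recall a siphon requires that every input reaction to $\tilde P$ is also an output reaction from $\tilde P$. So I would fix an arbitrary reaction $\mathrm R_j$ that is an input reaction associated to $\tilde P$, i.e.\ there is some $i \in \tilde P$ with $\beta_{ij} > 0$, and aim to show there must exist $i'\in \tilde P$ with $\alpha_{i'j} > 0$.

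First I would exploit the equilibrium identity $\dot x_i(x_e)=0$ written in full as
\[
0 \;=\; \sum_{j'=1}^{\nu} (\beta_{ij'}-\alpha_{ij'})\, R_{j'}(x_e).
\]
Because $i\in \tilde P$ means $x_{e,i}=0$, assumption A2 forces $R_{j'}(x_e)=0$ for every reaction $\mathrm R_{j'}$ that has species $X_i$ as a reactant, i.e.\ whenever $\alpha_{ij'}>0$. Those terms vanish from the sum, leaving only nonnegative summands of the form $\beta_{ij'} R_{j'}(x_e)$ with $\alpha_{ij'}=0$. Since the total is zero, each such summand is zero individually. In particular, for the distinguished reaction $\mathrm R_j$ with $\beta_{ij}>0$, either $\alpha_{ij}>0$ (so $\mathrm R_j$ is already an output reaction from $\tilde P$ and we are done) or $R_j(x_e)=0$.

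The heart of the argument is ruling out the second alternative under the contradiction hypothesis. Assume for contradiction that $\alpha_{i'j}=0$ for \emph{every} $i'\in \tilde P$. Then every reactant species of $\mathrm R_j$ lies outside $\tilde P$, i.e.\ has strictly positive concentration at $x_e$. I would then argue that $R_j(x_e)>0$, contradicting the conclusion of the previous paragraph. The cleanest route: view $R_j$ as a function only of its reactant coordinates (valid by A3), say $\tilde R_j(x_{i_1},\dots,x_{i_k})$. If $R_j(x_e)=0$, then by monotonicity (A3) and the boundary condition (A2), $\tilde R_j$ would be identically zero on the box $[0,x_{e,i_1}]\times\cdots\times[0,x_{e,i_k}]$, forcing $\partial R_j/\partial x_{i_\ell}=0$ on the interior of this box. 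This contradicts the strict positivity guaranteed by A4.

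The main subtlety I anticipate is handling the fact that $x_e$ itself need not lie in the strictly positive orthant (the non-reactant components may be zero), whereas A4 is stated for $x \in \mathbb R_+^n$. Since $R_j$ depends only on its reactant coordinates by A3, the argument effectively restricts to the reactant subspace where all relevant coordinates of $x_e$ are indeed positive, so A4 can be applied in that reduced space. Once this delicate use of A2--A4 is in place, the contradiction closes and $\tilde P$ is a siphon.
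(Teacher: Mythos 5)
Your proposal is correct and follows essentially the same route as the paper's proof: use the equilibrium condition $\dot x_i = 0$ for a species $X_i$ with $x_{e,i}=0$, invoke A2 to kill all terms coming from reactions that consume $X_i$, conclude that every producing reaction not consuming a $\tilde P$-species would have to satisfy $R_j(x_e)=0$, and contradict this via positivity of the rate when all reactants are positive. The only difference is that you make explicit (via A2--A4 and the box/monotonicity argument) the step $R_j(x_e)>0$ that the paper asserts implicitly, which is a welcome addition rather than a deviation.
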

\begin{proof} Assume that $\tilde P$ is not a siphon, then there exists some $X_i \in \tilde P$ and $\mathbf R_j \in \mathscr R$ such that $X_i$ is a product of $\mathbf R_j$ and $\mathbf R_j \ne \Lambda (\tilde P)$. At the given equilibrium, all negative terms in the expression of $\dot x_i$ vanish since $x_{ei}=0$. Since $X_i$ is not a reactant in $\mathbf R_j$ this implies $\beta_{ij}>0, \alpha_{ij}=0$ then $R_j(x)$ has a strictly positive coefficient which implies $\dot x_i >0$; a contradiction.
\end{proof}

Hence, we are ready to conclude the proof of Theorem \ref{th.criticalSiphon}:
\begin{proof}[{Proof of Theorem \ref{th.criticalSiphon}-3)}]
 By Lemma \ref{lem.brouwer}, there exists an equilibrium $x^* \in \Psi_P$ such that $\Gamma R(x^*)=0$. Since $\dim(\ker \Gamma)=1$, this implies that $R(x^*)=t v$ for some $t \ge 0$. Consider the case $t=0$. This implies $R(x^*)=0$. Then, $P \subset \tilde P:= \{1,..,n\}\backslash \supp(x^*)$. $\tilde P$ is a siphon by Lemma \ref{lem.zero_siphon}, and since  $P \subset \tilde P$ it is a critical deadlock. However, by Theorem \ref{th.criticalSiphon}-1), $\mathscr N_{A,B}$ is not GS. If $t>0$, this implies that $P=\emptyset$; a contradiction.
\end{proof}

\paragraph*{Proof of Theorem \ref{th.contraction}}
Write the logarithmic norm expression using \eqref{e.lognorm}:
\begin{align*}
     \mu_C(J_1(\xi)) & = \limsup_{h \to 0^+} \frac 1h \left ( \left \|    I + h \frac{\partial R}{\partial x} \Gamma   \right \|_C - 1  \right )
\end{align*}
The expression above includes the induced matrix norm. 
Using the definition of the induced matrix norm we proceed as follows:
\begin{align*}
       \left \|   I + h \frac{\partial R}{\partial x} \Gamma \xi   \right \|_C & = \sup_{\|C\xi\|_\infty =1 } \left \| C \left ( I + h \frac{\partial R}{\partial x} \Gamma  \right ) \xi \right \|_\infty
        \mathop{=}^{(\star)}   \sup_{\|C\xi\|_\infty =1 } \left \| C \xi + h   \sum_{\ell=1}^s \rho_\ell(x) C e_j \gamma_i^T \xi   \right \|_\infty
       \\ & \mathop{=}^{(\clubsuit)}    \sup_{\|C\xi\|_\infty =1 } \left \| C \xi + h   \sum_{\ell=1}^s \rho_\ell  \tilde\Lambda^{\ell} C \xi   \right \|_\infty
       \\ &  \le      \sup_{\|C\xi\|_\infty =1 } \left \| I + h   \sum_{\ell=1}^s \rho_\ell  \tilde\Lambda^{\ell}     \right \|_\infty \|C \xi \|_\infty  \left \| I + h   \sum_{\ell=1}^s \rho_\ell  \tilde\Lambda^{\ell}     \right \|_\infty,
\end{align*}
where $(\star)$ is by \eqref{e.rho_decomp} and $(\clubsuit)$ is by \eqref{e.NewCheck_tilde}. Therefore, the expression of the logarithmic norm above can be written as:
\begin{equation}\label{e.lognorm_ineq}  \mu_C(J_1(\xi))  \le \mu_\infty \left ( \sum_{\ell=1}^s \rho_\ell  \tilde\Lambda^{\ell} \right ) \le   \sum_{\ell=1}^s \rho_\ell  \mu_\infty(\tilde\Lambda^{\ell}) = 0, \end{equation}
where the inequalities follow by the subadditivity of the logarithmic norm and \eqref{e.lognorm_expression}.

Note since $C$ has nonempty kernel space, then $\|C\xi\|_\infty$ is a semi-norm. However, Theorem \ref{th.contraction} requires a norm. This can be remedied by studying the system in directions orthogonal to $\ker \Gamma$ by defining a transformation of coordinates using a matrix $T_1= [ \hat T_1,v_1,..,v_{\nu-\nu_r } ]^T $, where $\{v_1,..,v_{\nu-\nu_r }\}$ is a basis of $\ker \Gamma$, $\nu_r=rank(\Gamma)$, and $\hat T_1$ is chosen so that $T_1$ is invertible. Defining $\hat \xi=T\xi$, then the first $\nu_r$ coordinates of $\hat\xi$ are decoupled from the rest. Hence, inequality \eqref{e.lognorm_ineq} can be established similarly for the reduced Jacobian which is the upper right $\nu_r \times \nu_r$ block of $T_1 J_1(\xi) T_1^{-1}$. The norm in the reduced subspace is $\|.\|_{\hat C}: z \mapsto \|\hat C z \|_\infty, z \in \mathbb R^\nu$, where $\hat C$ is $\tfrac m2 \times \nu_r$ defined as the nonzero columns of $C T_1^{-1}$. The equations $ C e_j \gamma_i^T = \tilde\Lambda^{\ell} C$ are equivalent to $(C T_1^{-1} )(T_1 e_j \gamma_i^T T_1^{-1}) = \tilde\Lambda^{\ell} (CT_1^{-1})$. Therefore, everything goes through in the reduced space, and the same upper bound is valid.

The same argument can be replicated for to prove the second item in the theorem. This is accomplished by utilizing the alternative representation \eqref{e.alternative_representation}, the rank-one decomposition of the Jacobian \eqref{e.rho_dual}, and noting that conditions \eqref{e.NewCheck_tilde} can be written as: $B\Gamma_i e_j^T = \tilde\Lambda^\ell B + Y^{\ell} D^T$ for some matrices $Y^{1},..,Y^{\ell}$. The reduced space argument can be carried out also by using a transformation matrix $T_2=[\hat T_2, D]^T$. \hfill $\blacksquare$

\section*{References}

\end{document}